\newcommand{\field}[1]{\mathbb{#1}}
\newcommand{\beq}{\begin{equation}}
\newcommand{\eeq}{\end{equation}}
\newcommand{\rank}{{\rm rank}}
 \journalname{Arxiv}
\begin{document}

\title{The bispectrum as a source of phase-sensitive invariants for Fourier descriptors: a group-theoretic approach\thanks{The support of AcRF Tier 1 Research Grant RG 033-09 is gratefully acknowledged.}
}
%\subtitle{}

\titlerunning{Bispectral Fourier Descriptors}        % if too long for running head

\author{Ramakrishna Kakarala %         \and
   %     Second Author %etc.
}

%\authorrunning{Short form of author list} % if too long for running head

\institute{R. Kakarala \at
              Nanyang Technological University \\
              Tel.: +65-6790-5925\\
              Fax: +65-6792-6559\\
              \email{ramakrishna@ntu.edu.sg}           %  \\
%             \emph{Present address:} of F. Author  %  if needed
      %     \and
         %  S. Author \at
           %   second address
}
\date{This article is dedicated to the memory of Professor Bruce Michael Bennett}
% The correct dates will be entered by the editor

\maketitle

\begin{abstract}
This paper develops the theory behind the bispectrum, a concept that is well established in statistical signal processing but not, until recently, extended to computer vision as a source of frequency-domain invariants.   Recent papers on using the bispectrum in vision show good results when the bispectrum is applied to spherical harmonic models of three-dimensional ($3$-D) shapes, in particular by improving discrimination over previously-proposed magnitude invariants, and also by allowing detection of neutral pose in human activity detection.  The bispectrum has also been formulated for vector spherical harmonics, which have been used in medical imaging for $3$-D anatomical modeling.  In a paper published in this journal, Smach {\it et al.} use duality theory to establish the completeness of second-order invariants which, as shown here, are the same as the bispectrum.  This paper unifies earlier works of various researchers by deriving the bispectrum formula for all compact groups. It also provides a constructive algorithm for recovering functions from their bispectral values on $SO(3)$.  The main theoretical result shows that the bispectrum serves as a complete source of invariants for homogeneous spaces of compact groups, including such important domains as the sphere $S^2$. 
  
\keywords{Harmonic analysis \and Invariant theory \and Fourier descriptors \and\ Spherical harmonics }

%\begin{AMS}
%68T10,43A77,14L24
%\end{AMS}
% \PACS{PACS code1 \and PACS code2 \and more}
% \subclass{MSC code1 \and MSC code2 \and more}
\end{abstract}

\section{Introduction}
\label{intro}

The classical approach to computing invariants for pattern recognition takes place in what, in signal processing terms, is called the ``time domain''.  Moments are computed using formulae such as the following, for three-dimensions (3-D): 
\beq
m_{pq}^{\ell} = \int_{-\infty}^{\infty} \int_{-\infty}^{\infty} \int_{-\infty}^{\infty}  x^{p}y^{q}z^{\ell-p-q} f(x,y,z) dx\, dy\, dz.
\eeq
This approach, which analyzes the data expressed by $f$ directly on its domain $\field{R}^{3}$, differs fundamentally from an alternative which may be called the ``frequency domain'' approach.  The latter relies on Fourier expansion on $\field{R}^{3}$, or the equivalent harmonic expansion on different domains.  The Fourier expansion is expressed for those functions whose domain is the sphere using spherical harmonics as follows:
\beq
f(\theta,\phi) = \sum_{\ell=0}^{\infty}\sum_{k=-\ell}^{\ell} F_{\ell}^ k Y_{\ell}^{k}(\theta,\phi).
\label{eq:spharm}
\eeq
The power-spectral invariant (again using signal processing terminology) to rotation for spherical harmonic coefficients is the norm of the $(2\ell+1)$-dimensional vector 
$\|F_{\ell}\| = \sqrt{\sum_{k=-\ell}^{\ell} |F_{\ell}(k)|^2}$.  The book by Flusser \cite{flusserbook} provides a detailed review of moment invariants, which have been investigated for several decades now \cite{Hu}, and also discusses spherical harmonic invariants, which are, in comparison, recent.  

\begin{figure}[t]
\begin{center}
%\fbox{\rule{0pt}{2in} \rule{0.9\linewidth}{0pt}}
   \includegraphics[width=0.75\linewidth]{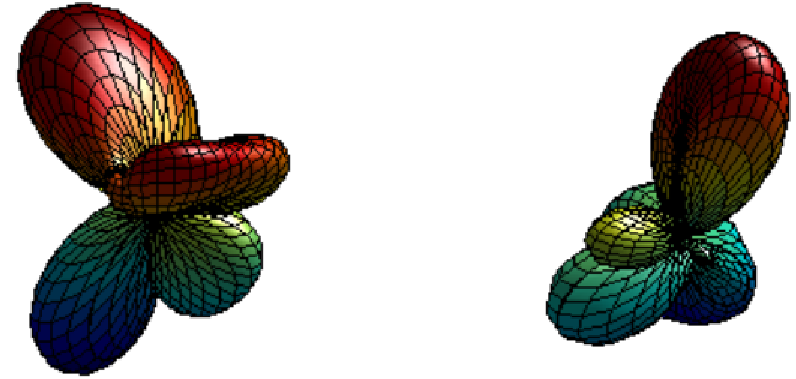}
\end{center}
   \caption{The two shapes shown are not rotations of each other, and yet have identical spherical harmonic power spectra. The
   shapes are discriminated easily by using the bispectral invariants described in this paper.}
\label{fig:onenotrot}
\end{figure}

The power spectrum invariant is relatively simple, but it lacks discriminative power.  Figure~\ref{fig:onenotrot} shows an example of how the power spectrum can be fooled into confusing dissimilar shapes. Although the power spectrum provides a weak description,  it is worth noting that there are signals that may in fact be recovered from their power spectrum alone.  For example, that is true if a signal on $\field{R}$ is positive definite, or has a known compact support with certain additional structure: Cand\`{e}s, Stromer, and Voroniskii \cite{candesphase} discuss the ``phase-retrieval'' problem in detail and provide an approach to solving it using convex programming.  The power spectrum is not the only choice if invariants are required. An alternative class of frequency-domain invariants is obtained from the signal processing concept of the {\em bispectrum}, which is explored in detail in this paper.  This concept originates from the statistical theory of polyspectra, which aims to capture statistical information that differs from an assumed Gaussian model \cite{brillinger}.  The bispectrum is the Fourier transform of the triple correlation, which is similar to the familiar autocorrelation but has one additional shift.  To illustrate with a simple example, the triple correlation for one-dimensional data is defined as 
\beq
a_{3,f}(s_1, s_2) = \int_{-\infty}^{\infty}  f^{*}(x) f(x+s_1) f(x+s_2) dx
\eeq
The bispectrum is the Fourier transform of $a_{3,f}$ and, in terms of the Fourier transform $F$ of the original function $f$, it is the following:
\beq
A_{3,f}(u_1,u_2) = F(u_1)F(u_2)F^{*}(u_1+u_2).
\label{eq:bilinear}
\eeq
It is easy to see that the bispectrum is translation-invariant, since a translation $x\mapsto x+t$ is, in the frequency domain, the transformation $F(u)\mapsto F(u)e^{jut}$, and the product of three terms on the right cancels out the linear phase component $e^{jut}$.  Moreover, when one argument is zero, for example $u_2=0$, then $A_{3,f}(u,0)=F(0)|F(u)|^2$, and hence the bispectrum contains the power spectrum $|F(u)|^2$.  

The bispectrum has been extended by several researchers from the simple linear case expressed in (\ref{eq:bilinear}) to spherical harmonics, to vector spherical harmonics, as well as to handle ranking data, in ways that are described below.  It is the purpose of this paper to bring together the different approaches in a unifying manner by describing the bispectrum using group theory, and to analyze the underlying theory using duality theory in a manner similar to the recent paper of Smach {\it et al} appearing in this journal \cite{smach}.  Though we define the terms precisely later on, let us introduce the main results.  Let $G$ be a compact group with Haar measure $dg$.  Define the triple correlation of a function $f$ on $G$ to be
\beq
a_{3,f}(g_1,g_2) = \int_{-\infty}^{\infty}  f^{*}(g) f(gg_1) f(gg_2) dg
\label{eq:triplecorre}
\eeq
We derive the bispectral invariants of the data using the Fourier transform on $G$.  The transform requires the irreducible unitary representations $D_{\sigma}$ of $G$, which are intuitively the basis elements of the Fourier transform. Here $\sigma$ denotes an index whose values lie in the {\em dual object} of $G$, which is denoted ${\cal G}$. The dual object is the set of equivalence classes of irreducible unitary representations of $G$, as defined in Hewitt \& Ross \cite[pg 2]{hewittross}. The matrix-valued Fourier coefficient for each $\sigma\in{\cal G}$ is 
\beq
F(\sigma) = \int_{G} f(g) D_{\sigma}(g)^{\dagger} dg
\label{eq:groupfour}
\eeq
The reader who is not familiar with group representation theory may consider that (\ref{eq:groupfour}) behaves in most ways like the ordinary Fourier series coefficient, which is a scalar, and which is obtained with $D_{\sigma}(\phi) = e^{j\sigma \phi}$ for integer $\sigma$ when $G$ is the periodic interval $[0,2\pi]$.    Below, we provide an example when $G$ is the 3-D rotation group $SO(3)$ below.  Using (\ref{eq:groupfour}), the bispectrum is the Fourier transform of the triple correlation and has the form
\beq
A_{3,f}(\sigma,\delta) = \left[ F(\sigma)\otimes F(\delta)\right] C_{\sigma\delta} \left[ \bigoplus_{\omega\in \sigma\otimes\delta} F(\omega)^{\dagger}\right] C_{\sigma\delta}^{\dagger}
\label{eq:bis}
\eeq
Here the notation is as follows: $\otimes$ is the Kronecker product of matrices, $\oplus$ is direct sum,  $C_{\sigma\delta}$ is the unitary matrix known as the Clebsch-Gordan matrix in studies of angular momentum \cite{hamermesh}; and $\omega\in \sigma\otimes\delta$ means all irreducible representations $\omega$ contained in the Kronecker product of representations $D_\sigma$ and $D_\delta$. The set of $\omega$ such that $\omega\in\sigma\otimes\delta$ depends on the group $G$; for example, when $G=SO(3)$, the indices $\sigma$, $\delta$ are non-negative integers, and $\omega$ takes integer values from $|\sigma-\delta|$ to $\sigma+\delta$, inclusive. Smach {\it et al} \cite{smach} introduce a ``second-order descriptor'' similar to (\ref{eq:bis}). For each $\sigma$, $\delta$ in ${\cal G}$, define  
\beq
F(\sigma\otimes \delta) = \int_{G} f(g) \left[ D_\sigma(g) \otimes D_{\delta}(g) \right]^{\dagger} dg.
\label{eq:fsigdelta}
\eeq
Then Smach {\it et al}'s descriptor is 
\beq
I^{2}_{f}(\sigma,\delta)=\left[F(\sigma)\otimes F(\delta)\right] F(\sigma\otimes \delta).
\label{eq:smach2nd}
\eeq
Below,  we compare $I^{2}$ in (\ref{eq:smach2nd}) to the bispectrum $A_{3}$ and point out the $A_{3}$ is much simpler to compute, though the two forms turn out to be mathematically equivalent.  It is important to note that the bispectrum, both for the Euclidean and for non-commutative domains, has been described by researchers in statistics, astrophysics and quantum mechanics prior to its appearance in the computer vision literature.   As shown in Section~\ref{sec:algos}, the formula (\ref{eq:bis}) allows a constructive algorithm for recovering functions from their bispectral values on $SO(3)$, unlike (\ref{eq:smach2nd}). Furthermore, the main theoretical result in this paper shows that eq.~(\ref{eq:bis}) serves as a complete source of invariants for homogeneous spaces of compact groups, including such important domains as the sphere $S^2$. 

The motivation for studying the bispectrum using group theory comes from two sources.  First, the bispectrum has been used in several fields, ranging from astrophysics,  statistics of ranking data, and, recently, in computer vision.  The reason that is has found such wide application come not only from the statistical basis that it allows testing for non-Gaussianity, but also that it detects structural aspects of---using signal processing terms---phase as opposed to magnitude.  Those different applications suggest that a unifying theory should exist, and in this paper one is proposed using group representations.  The second reason is that the bispectrum has a practical benefit for computer vision in providing a compact set of rotation invariants that improves discrimination, and detects bilateral reflection symmetry.   The group theoretic approach, specifically using duality theory, shows that such invariants have the very desirable property of completeness.  

The organization of this paper is as follows. The first half of the paper provides motivating examples of the bispectrum and its applications,  to provide context for the detailed mathematical analysis that follows in the second half. Section~\ref{sec:review} provides a review of five different applications of the bispectrum, drawn from various fields by multiple researchers, that have an underlying group theoretic basis.  Subsequently, the group-theoretic foundations of the bispectrum are described, and the next two sections provide the principal contributions of this paper: proofs of the completeness of the bispectrum, using both duality theory and constructive methods. The main result, establishing the completeness of the bispectrum for homogeneous spaces of compact groups, is provided in Section~\ref{sec:homog}. Subsequently, we illustrate applications of the theory to shape analysis and retrieval.  
This paper builds on previous publications of the author \cite{kakarala93}\cite{kakaralacvpr2010}\cite{kakaralahau3d}.  However, the theory provided in the middle sections of this paper has not been published previously\footnote{A previous version, appearing in the author's unpublished PhD thesis \cite{kakaralathesis}, has been cited by several researchers.}.

\section{Review of literature on bispectra and related concepts}
\label{sec:review}

In order to meaningfully discuss the literature on bispectral methods in computer vision, it is essential to introduce the following notation and basic concepts.  Classic books \cite{courant-hilbert}\cite{hamermesh} provide details.  For the spherical harmonic expansion of eq.~(\ref{eq:spharm}), let $F(\ell)$ denote the $2\ell+1$-dimensional row vector containing the coefficients $F_{\ell}^{-\ell}$, $\ldots$, $F_{\ell}^{\ell}$.  Let $u$ denoted the unit vector in 3-D expressed in terms of colatitude $\theta$ and longitude $\phi$ as follows
\beq
u = \left[  \cos(\phi)\sin(\theta), \sin(\phi)\sin(\theta), \cos(\theta) \right]^{T}.
\eeq
If $f$ is a function whose domain is the sphere $S^{2}$, let $f(u)$ denote the value of $f$ at the unit vector $u$, and let $F_{\ell}$, for every $\ell\geq 0$ denote its spherical harmonic coefficient vectors.   The coefficients vectors display a rotation property that is essential to our discussion.  The property is perhaps best understood by considering the group-theoretic basis.  Let $SO(3)$ denote the 3-D rotation group, which is, as is well known, a compact topological group \cite{hewittross}.   If we rotate a function $f$ on the sphere by $R\in SO(3)$, then $f(u)\mapsto f(Ru)$ for all $u$, and correspondingly the Fourier coefficient vectors undergo a unitary transformation
\beq
F_\ell \mapsto F_\ell D_\ell(R).
\label{eq:rotprop}
\eeq
The $2\ell+1$-dimensional matrices $D_{\ell}$ are unitary {\em representations} of $SO(3)$: for every $R,S$ in $SO(3)$, and with $\dagger$ denoting conjugate-transpose, we have the properties
\beq
D_{\ell}(R)^{\dagger}D_{\ell}(R) = I; \quad D_{\ell}(RS) = D_{\ell}(R)D_\ell(S).
\eeq
Furthermore, the $D$ matrices have an important product decomposition formula, which may be described as follows.  Let $\otimes$ denote the matrix tensor product: if $A$ is a $n\times n$ matrix, and $B$ an $m\times m$ matrix, then $A\otimes B$ is the $nm\times nm$-dimensional matrix obtained by repeating $B$ multiplied by $a_{ij}$ for $1\leq i,j\leq n$. Similarly $A\oplus B$ denotes the $n+m$ block diagonal matrix containing $A$ in its upper left $n\times n$ entries, $B$ in its lower right $m\times m$ entries.  With that notation, the tensor product formula decomposition on $SO(3)$ is 
\begin{eqnarray}
D_{p}(R)\otimes D_{q}(R) &=& C_{pq}\left[ D_{p+q}(R)\oplus D_{p+q-1}(R)\oplus \cdots \right.\nonumber\\
 & & \oplus D_{|p-q|}(R)] C_{pq}^{\dagger}.
\label{eq:tencompso3}
\end{eqnarray}
%\eeq
The matrix $C_{pq}$ is a unitary matrix that is called the {\it Clebsch-Gordan} matrix of the decomposition.  What (\ref{eq:tencompso3}) says is that the tensor product of representations for $p$ and $q$, which is a relatively large matrix, may be decomposed into a much simpler block diagonal structure: more formally, the tensor product is unitarily equivalent to a direct sum of lower dimensional representations.

With that background, we can describe five different applications of the bispectrum for compact groups that have appeared in the literature.  The survey motivates the development of a general theory, which is provided in the following sections.

\subsection{Astrophysics}

The Big Bang theory predicts an observable cosmic microwave background (CMB) radiation, which is a thermal radiation at approximately $2.7$K that fills the observable universe in an almost uniform manner.   Spectral analysis of CMB data uses spherical harmonic expansion of the sky map.   Sefusatti {\it et al.} \cite{CosmoBis06} show that the bispectral analysis improves estimation of cosmological constants over the traditionally used power spectrum.  The formulae for bispectral analysis, described earlier in astrophysics by Luo in 1994 \cite{luo}, are as follows.  Using the expansion (\ref{eq:spharm}), the power spectrum is defined by 
\beq
P(\ell) = F_{\ell} F_{\ell}^{\dagger} = \sum_{k=-\ell}^{\ell}\left| F_{\ell}^{k} \right|^2.
\label{eq:power}
\eeq
The bispectrum is defined by Luo as the coefficients, denoted $B_3$, in the Fourier expansion of the three-point correlation function:
\begin{eqnarray}
%\beq
 E\left[ f(u_1)f(u_2)f(u_3)\right] &=& \sum_{\ell_i,k_i} B_3(\ell_1,k_1,\ell_2,k_2,\ell_3,k_3)\nonumber\\
 & &Y_{\ell_1}^{k_1}(u_1)Y_{\ell_2}^{k_2}(u_2)Y_{\ell_3}^{k_3}(u_3).
 \label{eq:bispecluo}
%\eeq
\end{eqnarray}
Luo shows that $B_3=0$ unless the following conditions are met: $k_1+k_2+k_3=0$; for all $i$, we have $\ell_i \leq | \ell_j - \ell_k |$; and $\ell_1 + \ell_2 + \ell_3$ is even.  We may interpret $B_3$ as the Fourier expansion of the stochastic triple correlation, the deterministic version of which is shown above (\ref{eq:triplecorre}).  Consequently, the coefficients $B_{3}$ on the right side of (\ref{eq:bispecluo}) will match in form the expression (\ref{eq:bis}).  

\subsection{Ranking data}

Ranking data arise in search engine rankings, surveys and elections.  A ranking is an ordering of the numbers $\{1,2,\ldots,n\}$.  The $n!$ possible orderings form the discrete symmetric group, which is denoted $S_n$.  We may use a function $f:S_n\rightarrow \field{N}$ to express the results of a survey where respondents are asked to rank $n$ items; here, for each $p\in S_n$, the value of $f(p)$ is the number of times that ordering $p$ is chosen by survey respondents.  Diaconis \cite{diaconismono} shows the value in analyzing ranking data in the frequency domain.  Because $S_n$ is a compact group, the $\ell$-th Fourier coefficient of $f$ is obtained by the integral
\beq
F(\ell) = \sum_{p\in S_n} f(p) U_{\ell}(p)^{\dagger}.
\label{eq:sncoeff}
\eeq
Here $U_{\ell}$ is the Fourier basis obtained from, for example, the Young orthogonal representation of the symmetric group; see \cite{diaconismono} for details.  The Fourier coefficients capture the variation in the ranking data in terms of the modes at which it occurs.  Kakarala \cite{kakaralatsp} shows that Fourier analysis of ranking data breaks down into magnitude and phase, with the phase information conveying relative position.  Furthermore, it is shown in that paper that several real-world ranking datasets have linear phase, which means that they are symmetric with respect to inversion.  Linear phase may be detected with the aid of the bispectrum, defined in \cite{kakaralatsp} as 
\beq
B(\sigma,\delta) = \left[ F(\sigma)\otimes F(\delta)\right] C_{\sigma\delta} \left[\bigoplus_{\omega\in\sigma\otimes\delta} F(\omega)^{\dagger}\right] C_{\sigma\delta}^{\dagger}.
\label{eq:bispectum}
\eeq
This is of course exactly the same as $A_{3,f}$ defined in (\ref{eq:bis}).

\subsection{Computer vision}

Researchers in computer vision and medical imaging have shown considerable interest in modeling 3-D shape using spherical harmonics.  Kazhdan {\em et al} \cite{KazhdanFR03} show that the power spectrum (\ref{eq:power}) provides a compact yet discriminative set of rotation invariants.  Kakarala \& Mao \cite{kakaralacvpr2010} showed that bispectral invariants are superior to the power spectral invariants of \cite{KazhdanFR03} in discrimination of objects.  Their results are discussed further below. The bispectral invariants of \cite{kakaralacvpr2010} are defined as follows:
\beq
b_{f}^{k\ell}(i) = \left[F_k \otimes  F_\ell\right] C_{k\ell} \check{F}_{i}^{\dagger}; \quad |k-\ell| \leq i \leq k+\ell.
\label{eq:redbis}
\eeq
Here $C_{k\ell}$ is the $(2k+1)(2\ell+1)$-dimensional  unitary Clebsch-Gordan matrix appearing in (\ref{eq:tencompso3}), and the symbol $\check{F}_i$ denotes the $2i+1$-dimensional vector of spherical harmonic coefficients $F_{i}$ padded with zeros to be $1\times(2k+1)(2\ell+1)$ in size, so as to complete the weighted inner product with the matrix $C_{k\ell}$. The reason for the zero-padding is that the direct sums in (\ref{eq:bis}), which stack matrices in a block-diagonal manner, leave many zeros on either size of the coefficient vectors. This is described further in an earlier paper \cite{kakaralacvpr2010}.

Reisert \& Burkhardt \cite{ReisertB06} proposes to use the group representations on $SO(3)$ to form invariants.  Specifically, given a 3-D object $X$ and a scalar function $f$ operating on $X$, let the $\ell$-th projection be the $(2\ell+1)$-dimensional matrix
\beq
F(\ell) = \int_{SO(3)} f(RX) D_{\ell}(R)^{\dagger} dR
\label{eq:project}
\eeq
Here $dR$ is the rotation invariant Haar measure on $SO(3)$.    Under 3-D rotation by an element $Q$ of $SO(3)$, the coefficient matrix $F(\ell)$ undergoes the transformation 
$F(\ell)\mapsto F(\ell)D_{\ell}(Q)$.  Because $D_{\ell}$ is unitary, Reisert proposes to use the norms of the rows of $F(\ell)$ which remain invariant under the unitary transformation.  Essentially, this invariant is of power-spectral type as it uses the positive semidefinite matrix $F(\ell)F(\ell)^{\dagger}$, the diagonal entries of which are respective norms of the rows of $F(\ell)$.  

Chung \cite{chung} proposes to use the vectorial harmonics known as SPHARM, which are essentially spherical harmonic expansions for each of the coordinates of a function $V:S^2\rightarrow \field{R}^{3}$ for modeling cortical surfaces.  Fehr \cite{Fehr10}  extends the power-spectral and bispectral invariants to SPHARM.  The formulation may be described as follows.  Let $V(\theta,\phi) = \left[V_{1}(\theta,\phi), V_{2}(\theta,\phi), V_{3}(\theta,\phi)\right]^{\top}$ denote the vector-valued function $V$ expressed in terms of its three components as scalar functions on $S^2$.  Then, from (\ref{eq:spharm}), the $k$-th component function has the spherical harmonic expansion
\beq
V_{k}(\theta,\phi) = \sum_{\ell=0}^{\infty} \sum_{n=-\ell}^{\ell} F_{k,\ell}^{n} Y_{\ell}^{n}(\theta,\phi).
\label{eq:kthcomp}
\eeq
For each $\ell$, the coefficients $\{F_{k,\ell}^{n}\}$ may be assembled into $3\times (2\ell+1)$ matrix denoted ${\cal F}_{\ell}$, whose $k$-th row is the vector $F_{k,\ell} = \left[F_{k,\ell}^{-\ell},\ldots,F_{k,\ell}^{\ell}\right]$. If we rotate the $k$-th component function on $S^2$ by $V_{k}(u)\rightarrow V_k(Ru)$ for $R\in SO(3)$ and $u\in S^2$, then, from (\ref{eq:rotprop}), we have that 
$F_{k,\ell} \rightarrow F_{k,\ell} D_{\ell}(R)$. Consequently, if the same rotation $R$ is applied to all three components of $V$, we obtain ${\cal F}_\ell \rightarrow {\cal F}_{\ell} D_{\ell}(R)$. Therefore, for each $\ell$, the power spectrum forms a $3\times 3$ matrix of rotation invariants as follows:
\beq
{\cal P}_\ell = {\cal F}_\ell {\cal F}_\ell^{\dagger}.
\eeq
Fehr \cite{Fehr10} also formulates a bispectral invariant for SPHARM, as follows. For non-negative integers $\sigma$, $\delta$, we have 
\beq
{\cal B}_{\sigma\delta} = \left[ {\cal F}_{\sigma}\otimes {\cal F}_{\delta} \right] C_{\sigma\delta} \left[  \bigoplus_{\omega=|\sigma-\delta|}^{\sigma+\delta} \check{\cal F}_\omega^{\dagger} \right] C_{\sigma\delta}^{\dagger}.
\label{eq:fehr}
\eeq
The symbols used on the right hand side have the following meaning. The matrix $C_{\sigma\delta}$ is the unitary Clebsch-Gordan matrix shown in eq. (\ref{eq:tencompso3}).   Its dimension is $(2\sigma + 1)(2\delta+1)$ on a side.  The matrices $\check{\cal F}_\omega^{\dagger}$ are square with dimension $2\omega+1$ for each $\omega$, and contain respectively the ${\cal F}_\omega$ matrices embedded as the middle $3$ rows with the remaining rows being zeros.  We see in (\ref{eq:fehr}) another example of the bispectral formula (\ref{eq:bis}).  

\subsubsection{Generalized Fourier Descriptors}

Smach {\it et al} \cite{smach} describe a novel set of invariants called ``generalized Fourier descriptors''.  We can state those in terms of the group-theoretic Fourier transform (\ref{eq:groupfour}) as follows.  The first and second order descriptors are, respectively,

\begin{eqnarray}
&I^{1}_{f}(\sigma)=F(\sigma)F(\sigma)^{\dagger}&\\
&I^{2}_{f}(\sigma,\delta)=\left[F(\sigma)\otimes F(\delta)\right] F(\sigma\otimes \delta)&
\label{eq:smachdescriptors}
\end{eqnarray}

The first order descriptor is clearly the power spectrum.   The second order descriptor in \cite{smach} requires computation of the matrix Fourier coefficients at $\sigma$, $\delta$, and at the product representation $\sigma \otimes \delta$ as defined in (\ref{eq:fsigdelta}).  Smach {\it et al} do not discuss this point, but computing $F(\sigma\otimes\delta)$ is theoretically redundant because the representation $D_{\sigma}\otimes D_\delta$ is reducible into smaller dimensional representations.  Specifically, using the tensor product decomposition formula, we have that $\sigma\otimes \delta$ reduces into a direct sum of lower dimensional representations using the Clebsch-Gordan decomposition:
\beq
D_{\sigma}(g) \otimes D_{\delta}(g) = C_{\sigma\delta}\left[ \bigoplus_{\omega\in\sigma\otimes\delta} D_{\omega}(g) \right] C_{\sigma\delta}^{\dagger}
\label{eq:tensorprod}
\eeq
A specific example illustrates the redundancy in computing the tensor product in (\ref{eq:smachdescriptors}).  When $G=SO(3)$, the representations are indexed by integer $\ell \geq 0$ and each $D_{\ell}$ has dimension $2\ell+1$.  Consequently, we have equation (\ref{eq:tencompso3}), which shows that 
\beq
F(\sigma\otimes \delta) = C_{\sigma\delta} \left[ F(\sigma+\delta)\oplus\cdots\oplus F(|\sigma-\delta|)\right]C_{\sigma\delta}^{\dagger}.
\eeq
Consequently, if the normal Fourier coefficients $F(\ell)$ are computed for $\ell\leq \sigma+\delta$, then there is no need to compute the completely different coefficient $F(\sigma\otimes \delta)$.  Furthermore, we obtain that the bispectrum in (\ref{eq:bis}) is, on $SO(3)$, the following:
\begin{eqnarray}
A_{3,f}(\sigma,\delta) &=& \left[ F(\sigma)\otimes F(\delta) \right] C_{\sigma\delta} \left[ F(\sigma+\delta)^{\dagger}\oplus\nonumber\right.\\
& &\cdots\oplus F(|\sigma-\delta|)^{\dagger}]C_{\sigma\delta}^{\dagger}.
\label{eq:biss03}
\end{eqnarray}
The bispectrum is much simpler to work with on compact groups than (\ref{eq:smachdescriptors}) because all of the coefficients are computed using the standard Fourier basis obtained from the $D_\ell$ representations.

\section{Experimental results}

It is helpful to motivate the abstract analysis that follows with examples of applications.  The paper of Kakarala \& Mao \cite{kakaralacvpr2010} shows how bispectral invariants for spherical harmonics are derived from the group-theoretical form on $SO(3)$.   We review some of their findings here.  Specifically, to each function $f$ defined on the sphere $S^2$, which represents for example shape information or, alternatively, radiation measurements in astrophysics,  there corresponds a function $\widetilde{f}$ obtained by using the Euler angles $\alpha$, $\beta$, and $\gamma$, with $0\leq \alpha,\gamma \leq 2 \pi$ and $0\leq \beta \leq \pi$ and setting 
\beq
\widetilde{f}(\alpha,\beta,\gamma) = f(\alpha,\beta).
\eeq
Since one angle in the arguments to $\widetilde{f}$ is irrelevant, we may expect that the Fourier coefficients of $\tilde{f}$ on $SO(3)$ have a simple structure.  In particular, it is shown \cite{kakaralacvpr2010} that, if $\widetilde{F}(\ell)^{mn}$ denotes the $mn$-th element of the matrix on the left side of  eq.(\ref{eq:groupfour}), then only the middle row of the matrix is non zero, and furthermore that middle row is related to the spherical harmonic vector $F_{\ell}$ in (\ref{eq:spharm}) by a constant
\beq
m\neq 0 \Rightarrow \widetilde{F}(\ell)^{mn} = 0, \quad \widetilde{F}(\ell)^{0n} = a_\ell F_{\ell}.
\eeq
Consequently, because only the middle rows of $F(\ell)$-coefficient matrices matter, the bispectrum (\ref{eq:bis}) simplifies into the form expressed in (\ref{eq:redbis}).  Furthermore, the scalars $b_{f}^{k\ell}(i)$ have the following properties \cite{kakaralacvpr2010}:
\begin{enumerate}
\item The bispectrum contains the power spectrum: 
\[
b_{f}^{0\ell} = F_0 F_{\ell} F_{\ell}^{\dagger} = F_{0} \| F_{\ell} \|^2
\]
\item The bispectrum is invariant to rotation: if for some rotation $R$ we have $f_1(u) = f_2(Ru)$ for all $u\in S^{2}$, then the bispectra of function $f_1$, $f_2$ match, i.e., $b_{f_1} = b_{f_2}$.
\item Reflection of each real-valued function $f$ across any plane results in complex-conjugation of $b_{f}$, i.e., $f\rightarrow f^{O}$ implies that $b_{f} \rightarrow b_{f}^{*}$, where $f^{O}$ denotes the reflected function $f^{O}(u) = f(Ou)$, with $O$ a $3\times 3$ reflection matrix having ${\rm det}(O) = -1$.  Specifically, as shown in \cite[Thm 2.1]{kakaralahau3d}, bispectral terms $b_{f}^{k\ell}(i)$ with even order ($\ell + k + i$ even) are invariant under reflection because they are purely real, while odd-order bispectral terms are purely imaginary and therefore change sign.    
\end{enumerate}

\begin{figure}[t]
\begin{center}
%\fbox{\rule{0pt}{2in} \rule{0.9\linewidth}{0pt}}
   \includegraphics[width=0.5\linewidth]{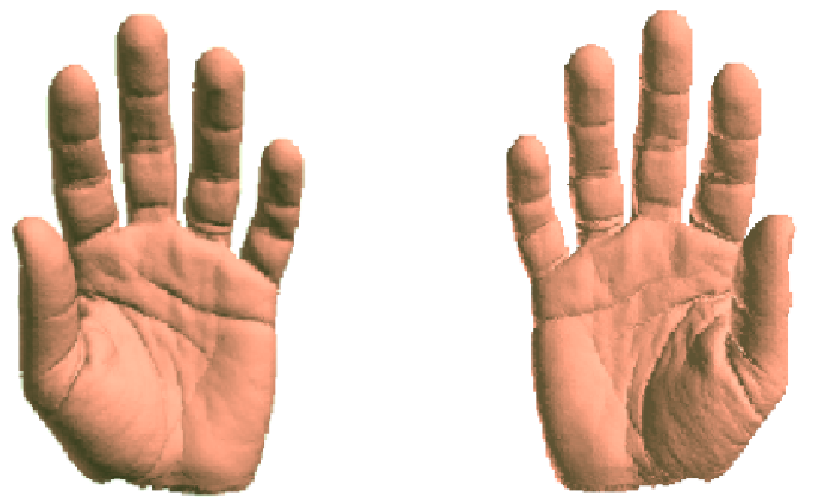}
\end{center}
   \caption{The vertices in the right hand are obtained by reflecting those of the left.  Consequently, the bispectral invariants of the right hand are complex conjugates of the left.}
\label{fig:handreflect}
\end{figure}

The last property shows that the bispectrum is able to differentiate rotation from reflection while remaining rotation-invariant. We illustrate the property using an experiment described in \cite{kakaralacvpr2010} that uses the hand shape\footnote{The hand shape appears courtesy of INRIA, and is available at the repository 
{\tt shapes.aimatshape.net}.} shown in Figure ~\ref{fig:handreflect}.  The details of computation, including how the Clebsch-Gordan coefficients are computed, are discussed in that paper.     The spherical harmonic coefficient vectors for $\ell = 1,2,\ldots,5$ are computed using the hand vertices, where on each trial the vertices were reflected across a randomly-chosen plane in 
$\field{R}^3$. The  magnitude-based invariants $\|F_{\ell}\|$ are essentially equal (to machine precision) for both the data and its reflection, despite there being no rotation to bring the two hands into alignment with one another. However, the bispectral invariants $b_{f}^{k\ell}(i)$ for $k=3$, $\ell=2$, and $i=1,\ldots,5$, are quite different. We define the normalized difference $E_{\rm odd}$ over $\ell+k+i$ odd (why only odd sum of indices is explained above) by  
\beq
E_{\rm odd} = \frac{\|b_{f} - b_{g}\|_{o}}{\|b_f\|_{o} + \| b_{g}\|_{o}}\times 100.
\label{eq:norme}
\eeq
The norm $\| \cdot \|_{o}$ is computed only over $\ell+k+i$ odd.  For the two hands, $E_{{\rm odd}}$ is nearly $100\%$. This nearly perfect difference clearly indicates the ability of the bispectrum to distinguish reflections from rotations.  Moreover, when the bispectral values for the right hand were conjugated, they matched those of the left with essentially zero error $E_{{\rm odd}}$, confirming that reflection is conjugation as stated in Property 3 above.  The experiment, when repeated with randomly chosen reflections, shows no significant change in results.   

The reflection property may also be used to determine bilateral symmetry. This application is also discussed in \cite{kakaralacvpr2010}, and is briefly described here.  If a $3$-D shape is bilaterally-symmetric, and therefore invariant to reflection about some plane, then its bispectral coefficients in (\ref{eq:redbis}) are real-valued.  To test the reliability of this property in noise, $39$ shapes were selected from the Princeton Shape Benchmark (PSB)\cite{Shilane04} which were not bilaterally symmetric, such as human hands, walking figures, and potted plants.  (Most shapes in PSB are bilaterally symmetric, because they represent balanced objects like aircraft or chairs). The $39$ shapes chosen exhibit varying levels of asymmetry. Each shape was normalized in size to $1.0$ mean distance from centroid, so that added noise may be expressed as a percentage. The shapes were reflected across a randomly-chosen plane, with Gaussian noise added to the coordinates. The distance (\ref{eq:norme}) was measured between the noise-free bispectral coefficients $b_{f}$, those of the noisy reflected shape, denoted $\hat{b}_{f}$, and their complex-conjugates $\hat{b}_{f}^{*}$.   If the bispectral coefficients $b_{f}$ are closer to $\hat{b}_{f}$ than to $\hat{b}_{f}^{*}$, then the shape is considered to be  confused with its reflection. With $5\%$ noise, the confusion rate was $2.6\%$; with $10\%$ noise, the rate was $5.1\%$; and$15\%$ noise resulted in confusion of $10.3\%$.  Hence adding up to $10\%$ noise does not incur significant error.

\begin{table}
\caption {Recognition rate (RR) of the power spectral invariants compared to the bispectrum.  We see in each case an improvement obtained by using of the bispectrum .}
\begin{center}
\begin{tabular}{|c|c|c|c|} \hline
Bandwidth $L$ & \% Noise added & RR (power spectral) & RR (bispectral)  \\
\hline 
%5 & 5 & 95 & 97 \\
%\hline
%5 & 10 & 85 & 90 \\
%\hline
7 & 5 & 90 & 96 \\ 
\hline
7 & 10 & 75 & 86 \\
\hline
9 & 5 & 91 & 95 \\
\hline
9 & 10 & 77 & 82 \\
 \hline
\end{tabular}
\end{center}
\label{tab:RRvsnoise}
\end{table}

The discriminative power of the bispectral invariants for arbitrary shapes may be tested using the full collection  in the PSB  \cite{Shilane04}.  The experiment is reported in \cite{kakaralacvpr2010} and described here.  Using the $907$ shapes in the PSB "testing" set, each randomly rotated with added noise over trials, we compute the recognition rate (RR), which is how often the correct model is identified in noisy data.  Table~\ref{tab:RRvsnoise} summarizes the results using invariants for $\ell\leq L$, where $L$ denotes the bandwidth.  It is clear from the table that bispectral invariants improve RR by up to $11\%$ over power-spectral invariants.  
%Figure~\ref{fig:noise} shows examples of the shapes used in the experiment with $5\%$ and $10\%$ noise.  
  
\section{Theoretical results}

The main results in this section date from 1992 as they were presented in the author's unpublished PhD thesis \cite{kakaralathesis}.   The interested reader may refer to a technical report \cite{kakaralaarxiv} to see the original proofs.  Since Smach {\em et al} \cite{smach} have recently published some similar results, we emphasize only what is different from \cite{smach} in this paper.   The background material on group representations may be found in either source \cite{kakaralaarxiv}\cite{smach}.  Throughout this section $G$ denotes a compact topological group with Haar measure $dg$.  

Our first result shows that the bispectrum is the Fourier transform of the triple correlation.  This fact is important for connecting the bispectrum to the statistical basis which spawned the concept of polyspectra in the first place \cite{brillinger}.   For $f\in L_1(G)$, the triple correlation $a_{3,f}$ is formally defined in eq.(\ref{eq:triplecorre}).  The bispectrum is, as noted above, the Fourier transform of $a_{3,f}$ on $G\times G$.  Let ${\cal G}$ denote the dual object of $G$ as defined in the Introduction. Consider the tensor product $D_{\sigma}\otimes D_\delta$ of any
two representations $\sigma$, $\delta$ in ${\cal G}$; that representation is, in general, reducible, and we write its decomposition
into irreducibles as follows:
\begin{equation}
D_{\sigma}\otimes D_{\delta} = C_{\sigma\delta}\left[ D_{\alpha_1} \oplus \cdots \oplus D_{\alpha_k}\right] C_{\sigma\delta}^{\dagger}.
\label{eq:3p5}
\end{equation}
 Using this result, we show that the bispectrum may be determined entirely in terms of the Fourier transform coefficients of the underlying function
\begin{lemma}
\label{lem:3p2p3}
For $\sigma$, $\delta$ in ${\cal G}$, there is a unitary matrix $C_{\sigma\delta}$ such that 
\begin{eqnarray}
\label{eq:3p7}
A_{3,f}(\sigma,\delta) &=& \left[F(\sigma)\otimes F(\delta)\right]C_{\sigma\delta}
\left[F(\alpha_1)^{\dagger}\oplus\cdots \right. \nonumber\\ 
& & \oplus F(\alpha_k)^{\dagger}]\,C_{\sigma\delta}^{\dagger}.
\end{eqnarray}
\end{lemma}

\begin{proof}
Since $a_{3,f}$ is integrable, we use the Fubini theorem to interchange the order of integration in the following
derivation:
\begin{eqnarray*}
A_{3,f}(\sigma,\delta) &=& \int_{G}\int_{G}a_{3,f}(g_1,g_2)\\
& &\left[D_{\sigma}(g_1)^{\dagger}\otimes D_{\sigma}(g_2)^{\dagger}\right]dg_1\,dg_2,\\
&=& \int_{G}\int_{G}\int_{G} f(g)^{*} f(gg_1) f(gg_2) \\
& &\left[D_{\sigma}(g_1)^{\dagger}\otimes D_{\delta}(g_2)^{\dagger}\right] dg\,dg_1\,dg_2,\\
&=& \int_{G} f(g)^{*} \int_{G}\int_{G} f(gg_1) f(gg_2) \\
& & \left[D_{\sigma}(g_1)^{\dagger}\otimes D_{\delta}(g_2)^{\dagger}\right] dg_1\,dg_2\,dg.
\end{eqnarray*}
By making a change of variables, we find that the double integral inside simplifies as follows:
\begin{eqnarray*}
\int_{G}\int_{G}f(gg_1)f(gg_2)\left[D_{\sigma}(g_1)^{\dagger}\otimes D_{\delta}(g_2)^{\dagger}\right]dg_1\,dg_2 \\
= \left[F(\sigma)\otimes F(\delta)\right]\left[D_{\sigma}(g)\otimes D_{\delta}(g)\right].
\end{eqnarray*}
Upon substituting into the expression for $A_{3,f}$, we find that 
\[ %begin{equation}
A_{3,f}(\sigma,\delta) = \left[F(\sigma)\otimes F(\delta)\right] \int_{G} f(g)^{*} \left[D_{\sigma}(g)\otimes D_{\delta}(g)\right]dg.
\label{eq:beforecg}
\] %end{equation}
Upon substituting the tensor product decomposition (\ref{eq:3p5}) into the above, we obtain that 
\begin{eqnarray}
A_{3,f}(\sigma,\delta) &=& \left[F(\sigma)\otimes F(\delta)\right] C_{\sigma\delta} \left[\int_{G}f(g)^{*}(D_{\alpha_1}(g) \oplus \cdots \right. \nonumber\\
& &\oplus D_{\alpha_k}(g))dg]C_{\sigma\delta}^{\dagger}.
\label{eq:aftercg}
\end{eqnarray}
After evaluating the integral, the result (\ref{eq:3p7}) follows. \end{proof}

The proof above also shows that the bispectrum is the same as the second order invariant of Smach {\it et al}.
\begin{lemma}
\label{lem:bisequali2}
$A_{3,f}(\sigma,\delta) = I^{2}_{f}(\sigma,\delta)$.
\end{lemma}
\begin{proof}
Eqns (\ref{eq:beforecg}) and (\ref{eq:aftercg}) show that 
\beq
F(\sigma\otimes\delta) =  C_{\sigma\delta}\left[ F(\alpha_1)\oplus \cdots \oplus F(\alpha_k) \right]C_{\sigma\delta}^{\dagger}.
\label{eq:Fourerequiv}
\eeq
Substiting this results into eq (\ref{eq:smachdescriptors}), we obtain (\ref{eq:3p7}).
\end{proof}

The lemma helps us to establish a fundamental theorem on the completeness on the bispectral invariants, by using the similar result of Smach {\it et al}~\cite{smach}, which is stated below. On compact groups, the Fourier coefficients are matrices, and therefore we use linear algebra terminology such as ``nonsingular'' to indicate that the matrices are invertible.   

\begin{theorem}[Thm 5, Smach {\it et al}] 
\label{thm:smach}
Let $r$, $s$ be two functions on a compact group $G$ such that the Fourier coefficient $R(\sigma)$ are nonsingular for all $\sigma$.  Then $I^{2}(r)=I^{2}(s)$ if and only if there exists $x\in G$ such that $s(g)=r(xg)$ for all $g$.

\end{theorem}

 The same result applies to the bispectrum by Lemma~\ref{lem:bisequali2}.  

\begin{theorem}
\label{thm:3p2p4}
Let $G$ be any compact group, and let $r$ in $L_{1}(G)$ be such that its Fourier coefficients $R(\alpha)$ are
nonsingular for all $\alpha\in{\cal G}$. Then $a_{3,s} = a_{3,r}$ for some $s\in L_{1}(G)$ if and only if there
exists $x\in G$ such that $s(g) = r(xg)$ for all $g$.  
\end{theorem}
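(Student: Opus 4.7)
The forward implication is immediate from the left-invariance of Haar measure already noted after~(\ref{eq:tripcor}), so the interest is in the converse. Assume $a_{3,s}=a_{3,r}$. The natural object to study is
\[
U(\alpha):=R(\alpha)^{-1}S(\alpha),
\]
well-defined because every $R(\alpha)$ is nonsingular by hypothesis. My plan is to apply Theorem~\ref{thm:3p2p2} to the sequence $\{U(\alpha)\}_{\alpha\in{\cal G}}$, producing an $x\in G$ with $U(\alpha)=D_\alpha(x)$, equivalently $S(\alpha)=R(\alpha)D_\alpha(x)$ for every $\alpha$; property~(ii) of the Fourier transform quoted in the preliminaries then converts this into $s(g)=r(xg)$ a.e.

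To invoke Theorem~\ref{thm:3p2p2} I must verify that each $U(\alpha)$ is unitary and that $\{U(\alpha)\}$ satisfies the intertwining identity~(\ref{eq:3p6}). Unitarity is the main obstacle, and I would attack it by specializing Lemma~\ref{lem:3p2p3} to $\delta={\bf 1}$. Since $D_\sigma\otimes{\bf 1}=D_\sigma$, we have $C_{\sigma,{\bf 1}}=I$ with a single summand $\alpha_1=\sigma$, so $A_{3,s}(\sigma,{\bf 1})=A_{3,r}(\sigma,{\bf 1})$ collapses to
\[
S({\bf 1})\,S(\sigma)S(\sigma)^\dagger \;=\; R({\bf 1})\,R(\sigma)R(\sigma)^\dagger.
\]
Taking also $\sigma={\bf 1}$ gives $|S({\bf 1})|^2\,S({\bf 1}) = |R({\bf 1})|^2\,R({\bf 1})$; comparing moduli and arguments and using $R({\bf 1})\neq 0$ forces $S({\bf 1})=R({\bf 1})$. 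Cancelling this common nonzero scalar and substituting $S(\sigma)=R(\sigma)U(\sigma)$ yields $R(\sigma)U(\sigma)U(\sigma)^\dagger R(\sigma)^\dagger = R(\sigma)R(\sigma)^\dagger$, whence $U(\sigma)U(\sigma)^\dagger=I$ by nonsingularity of $R(\sigma)$.

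For the intertwining identity, I would take a general decomposition~(\ref{eq:3p5}) and substitute $S(\alpha)=R(\alpha)U(\alpha)$ into both sides of the equation of Lemma~\ref{lem:3p2p3}. The factor $R(\sigma)\otimes R(\delta)$ is invertible (the tensor product of nonsingular matrices is nonsingular) and cancels from the left; using $C_{\sigma\delta}^\dagger C_{\sigma\delta}=I$ together with post-multiplication by $C_{\sigma\delta}\bigl[R(\alpha_1)^{-\dagger}\oplus\cdots\oplus R(\alpha_k)^{-\dagger}\bigr]C_{\sigma\delta}^\dagger$ then strips the block-diagonal $R$-factor from the right end, leaving
\[
[U(\sigma)\otimes U(\delta)]\,C_{\sigma\delta}\bigl[U(\alpha_1)^\dagger\oplus\cdots\oplus U(\alpha_k)^\dagger\bigr]C_{\sigma\delta}^\dagger \;=\; I.
\]
Inverting both sides and using $U(\alpha)^{-\dagger}=U(\alpha)$ (from the unitarity just established) together with unitarity of $C_{\sigma\delta}$ reproduces exactly~(\ref{eq:3p6}). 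Theorem~\ref{thm:3p2p2} then supplies the required $x\in G$. All cancellations in this last step act on factors known a priori to be invertible, so once unitarity of $U$ is in hand, the intertwining step is essentially bookkeeping; the delicate point is the twofold specialization to the trivial representation that pins down $S({\bf 1})$ and cleanly isolates $U(\sigma)U(\sigma)^\dagger$.
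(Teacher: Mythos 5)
Your proposal is correct and follows essentially the same route as the paper's proof: specialize the bispectrum identity at $\sigma=\delta={\bf 1}$ to pin down $S({\bf 1})=R({\bf 1})$, then at $\delta={\bf 1}$ to obtain $S(\sigma)S(\sigma)^{\dagger}=R(\sigma)R(\sigma)^{\dagger}$ and hence a unitary $U(\sigma)$ with $S(\sigma)=R(\sigma)U(\sigma)$, then cancel the nonsingular $R$-factors in the general identity to recover eq.~(\ref{eq:3p6}) and invoke Theorem~\ref{thm:3p2p2}. The only cosmetic difference is that you define $U(\alpha)=R(\alpha)^{-1}S(\alpha)$ outright and derive its unitarity by cancellation, where the paper cites the standard fact that $RR^{\dagger}=SS^{\dagger}$ forces $S=RU$ with $U$ unitary.
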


\begin{proof}
If $s(g) = r(xg)$, then the translation-invariance of the triple correlation implies that $a_{3,r} = a_{3,s}$. We now prove the converse.  Let $s$ be such that $a_{3,s} = a_{3,r}$; then $A_{3,r} = A_{3,s}$, and by Lemma~\ref{lem:bisequali2}, we obtain that $I^{2}_{r} = I^{2}_{s}$ and hence by Thm~\ref{thm:smach}, we have that $s(g) = r(xg)$ for all $g$. \qquad\end{proof}

It is worth noting that a completely different proof of Theorem~\ref{thm:3p2p4}, first presented in 1992, well before the elegant proof given in \cite{smach}, is provided in the companion report \cite{kakaralaarxiv}.
The hypothesis that all coefficients $R(\sigma)$ are nonsingular is satisfied generically, in the sense that almost every $n\times n$
matrix is nonsingular with respect to the Lebesgue measure on the set of $n\times n$ matrices.  Nevertheless, it is desirable
to weaken the hypothesis, to include for example functions on $G$ that are invariant under the translations of a normal
subgroup $N$ of $G$. A companion report \cite{kakaralaarxiv} gives the proof of the following theorem.  

\begin{theorem}
\label{thm:3p2p5}
Let $r \in L_{2}(G)$ be such that its Fourier coefficients $R$ satisfy the following conditions:
\begin{enumerate}
\item Each $R(\alpha)$ is either zero or nonsingular;
\item The set of $\alpha$ such that $R(\alpha)$ is non-singular includes the trivial representation $D(g)=1$, and is closed under
conjugation and tensor product decomposition.
\end{enumerate}
Then there exists a normal subgroup $N$ of $G$ such that $r$ is $N$-invariant, and furthermore $r$ is 
uniquely determined up to left translation by its bispectrum $A_{3,f}$.
\end{theorem}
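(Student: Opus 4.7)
My plan is to deduce the result by applying Theorem~\ref{thm:3p2p4} to the quotient group $G/N$. By the correspondence recalled immediately before the theorem statement, the set $\hat{A} = \{\alpha\in{\cal G} : R(\alpha)\text{ is nonsingular}\}$---which contains ${\bf 1}$ and is closed under conjugation and tensor-product decomposition---coincides with ${\cal G}[N]$ for a unique closed normal subgroup $N\triangleleft G$. Because the Peter-Weyl expansion of $r$ has no contribution from $\alpha\notin{\cal G}[N]$ (those Fourier coefficients being zero by hypothesis), $r$ lies in the closed subspace of $N$-invariant functions in $L_2(G)$, as established in the discussion preceding the theorem statement.

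For the completeness assertion, let $s\in L_2(G)$ satisfy $a_{3,s}=a_{3,r}$. I would first run the opening argument of the proof of Theorem~\ref{thm:3p2p4} to pin down $S$. Plugging $\sigma=\delta={\bf 1}$ into Lemma~\ref{lem:3p2p3} gives $R({\bf 1}) = S({\bf 1}) \neq 0$; then $\delta={\bf 1}$ with arbitrary $\sigma$, after cancelling the nonzero scalar $R({\bf 1})$, yields $R(\sigma)R(\sigma)^{\dagger} = S(\sigma)S(\sigma)^{\dagger}$ for every $\sigma\in{\cal G}$. This equality forces $S(\sigma)=0$ whenever $R(\sigma)=0$, and $S(\sigma)=R(\sigma)U(\sigma)$ for a unique unitary $U(\sigma)$ whenever $R(\sigma)$ is nonsingular. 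The first consequence, together with the Peter-Weyl argument already used for $r$, shows that $s$ is also $N$-invariant.

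Both $r$ and $s$ therefore descend to functions $\widetilde{r}$, $\widetilde{s}$ on $G/N$ whose Fourier transforms on $\widehat{G/N}\cong\hat{A}$ are all nonsingular. Closure of $\hat{A}$ under tensor-product decomposition ensures that for $\sigma,\delta\in\hat{A}$ every summand $\alpha_i$ in equation~(\ref{eq:3p5}) also lies in $\hat{A}$, so the manipulations at the end of the proof of Theorem~\ref{thm:3p2p4} apply verbatim: substituting $S(\alpha)=R(\alpha)U(\alpha)$ into equation~(\ref{eq:3p8}) and cancelling the nonsingular $R$-matrices produces the compatibility identity $U(\sigma)\otimes U(\delta) = C_{\sigma\delta}[U(\alpha_1)\oplus\cdots\oplus U(\alpha_k)]C_{\sigma\delta}^{\dagger}$ for all $\sigma,\delta\in\widehat{G/N}$. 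Theorem~\ref{thm:3p2p2} applied to $G/N$ then produces a unique $\widetilde{x}\in G/N$ with $U(\sigma)=\widetilde{D}_{\sigma}(\widetilde{x})$; lifting to any $x\in\pi^{-1}(\widetilde{x})$ and invoking the translation property of the Fourier transform on $G/N$ (transferred back to $G$ via $\pi$) gives $s(g)=r(xg)$ almost everywhere.

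The most delicate point, I expect, will be the middle step: the hypotheses give $N$-invariance of $r$ directly, but nothing a priori about $s$, so the $N$-invariance of $s$ must be extracted from matching bispectra alone. The $\delta={\bf 1}$ substitution, which collapses Lemma~\ref{lem:3p2p3} to a purely magnitude-squared identity, is precisely what makes this inheritance automatic; after that, the argument is a routine transcription of Theorems~\ref{thm:3p2p4} and~\ref{thm:3p2p2} onto the quotient.
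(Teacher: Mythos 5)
Your proposal is correct and takes essentially the same route as the paper: identify $N$ from the closure properties of the set of nonsingular coefficients, descend to $G/N$, and invoke Theorem~\ref{thm:3p2p4} there. The one place you go beyond the paper's (very terse) proof is in explicitly running the $\sigma=\delta={\bf 1}$ and $\delta={\bf 1}$ substitutions to get $S(\sigma)S(\sigma)^{\dagger}=R(\sigma)R(\sigma)^{\dagger}$ and hence $S(\sigma)=0$ off ${\cal G}[N]$, so that any competitor $s$ is itself $N$-invariant --- a step the paper leaves implicit but which is genuinely needed to conclude uniqueness among all of $L_{2}(G)$ rather than only among $N$-invariant functions.
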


\subsection{Homogeneous spaces}
\label{sec:homog}

The definition of a homogeneous space is as follows.  Let $G$ be any topological group and 
$X$ any topological space.  We say that $G$ {\em acts} on $X$ if for each $g \in G$ there exists a
homeomorphism $\rho_{g} : X \rightarrow X$, such that $\rho_{e}(x) = x$ for the identity $e$ in $G$,
and furthermore, for $g_1$, $g_2$ in $G$, we have $\rho_{g_1 g_2}(x) = \rho_{g_1}\left(\rho_{g_2}(x)\right)$.
The group $G$ acts {\rm transitively} on $X$ if for each $x_1$, $x_2$ in $X$, there exists $g\in G$
such that $\rho_{g}(x_1) = x_2$.  The space $X$ is a {\em homogeneous space} for $G$ if $G$ acts on
$X$ transitively and continuously.  A general example of a homogeneous space is the quotient space of right cosets 
$G\backslash H = \left\{ Hg : g\in G\right\}$ of a closed subgroup $H$ in $G$. In fact, it is a theorem that virtually all homogeneous spaces for compact groups $G$ are quotient spaces of a subgroup $H$ \cite[pg 124] {barut}. A specific example is when $G$ is the 3-D rotation group $SO(3)$, and $H$ is the subgroup that fixes the North pole $z=[0,0,1]$ on the sphere; then $G\backslash H$ corresponds to the sphere $S^2$. Our goal in this section is to investigate the bispectrum's completeness for functions on
arbitrary homogeneous spaces of compact groups. By the result cited above, we lose no generality 
by focusing on spaces of the form $G\backslash H$, where $G$ is some compact group and $H$ some closed subgroup
of $G$.  To any function $\widetilde{f}$ on $G\backslash H$ there corresponds a unique function $f$ on $G$ 
such that $f = \widetilde{f}\circ\pi$, where $\pi: G \rightarrow G\backslash H$ is the canonical coset map; conversely, 
to any function $f$ on $G$ that is invariant under left $H$-translations, i.e., $f(hg)=f(g)$ for all $g\in G$
and $h\in H$, there corresponds a unique function $\widetilde{f}$ on $G\backslash H$ such that $f = \widetilde{f}\circ \pi$. Thus
we lose no generality by further restricting our study of functions on homogeneous spaces to functions on
$G$ that are left $H$-invariant for some closed subgroup $H$.  

In this subsection, Theorem~\ref{thm:3p2p4} is extended using the Iwahori-Sugiura duality theorem for homogeneous
spaces of compact groups \cite{iwahorisugiura}.  This form of duality is not covered in the cases discussed by Smach {\it et al}\cite{smach}. Let $G$ be any compact group, $\{D_{\alpha}\}_{\alpha\in{\cal G}}$ be any selection of irreducible representations. Let $\Theta(G)$ be the algebra\footnote{An algebra of functions is a set of functions closed under addition and multiplication, where the multiplication distributes over addition and is compatible with scalar multiplication. In this case, addition and multiplication of two functions $f_1$, $f_2$ are respectively $f_1(g)+f_2(g)$ and $f_1(g)f_2(g)$ for all $g\in G$.}  of complex-valued functions on $G$ generated by finite linear combinations of member functions $d_{\alpha}^{ij}(g)$ of the selection.  For
any closed subgroup $H$ of $G$, let $\Theta_{H}(G)$ denote the subalgebra of $\Theta(G)$ consisting of functions
that are invariant under left $H$-translations.  For each $f\in\Theta_{H}(G)$, let $f(Hg)$ denote the
common value given to elements of the coset $Hg$ by $f$.  The algebraic structure of $\Theta_{H}(G)$ is
revealed to a large extent by the multiplicative linear functionals $\omega: \Theta_{H}(G) \rightarrow \field{C}$,
i.e., algebra homomorphisms of $\Theta_{H}(G)$.  The Iwahori-Sugiura theorem characterizes those 
algebra homomorphisms that preserve conjugation.

\begin{theorem}[Iwahori-Sugiura] To each algebra homomorphism $\omega: \Theta_{H}(G) \rightarrow \field{C}$ that
preserves conjugation, there corresponds a unique coset $Hg$ in the quotient space $G\backslash H$ such that for all
$f \in \Theta_{H}(G)$,
\begin{equation}
\eta(f) = f(Hg).
\end{equation} 
\end{theorem}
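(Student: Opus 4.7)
The strategy is to reduce the statement to the Tannaka--Krein theorem (Theorem~\ref{th:tk}), or more precisely to its matrix reformulation in Theorem~\ref{thm:3p2p2}. From $\omega$ I would construct a sequence of unitary matrices $\{U(\alpha)\}_{\alpha\in{\cal G}}$ satisfying the tensor-product compatibility (\ref{eq:3p6}); Theorem~\ref{thm:3p2p2} would then produce an element $g\in G$ with $D_\alpha(g)=U(\alpha)$ for every $\alpha$, and the residual ambiguity in the construction would account precisely for the coset $Hg\in G\backslash H$.

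Building $\{U(\alpha)\}$ requires a representation-theoretic description of $\Theta_H(G)$. For each $\alpha\in{\cal G}$ let $V_\alpha$ denote the representation space of $D_\alpha$, let $V_\alpha^H\subseteq V_\alpha$ be the subspace of vectors fixed by $D_\alpha|_H$, and set $m_\alpha=\dim V_\alpha^H$; the Reynolds projection $P_\alpha=\int_H D_\alpha(h)\,dh$ has range $V_\alpha^H$. Choosing an orthonormal basis of $V_\alpha$ whose first $m_\alpha$ vectors span $V_\alpha^H$, one checks that a matrix coefficient $d_\alpha^{pq}$ lies in $\Theta_H(G)$ exactly when $p\le m_\alpha$; by the Peter--Weyl theorem these ``spherical'' coefficients span $\Theta_H(G)$ and separate the cosets $Hg$. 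The values $\omega(d_\alpha^{pq})$ with $p\le m_\alpha$ thus prescribe the top $m_\alpha$ rows of each candidate matrix $U(\alpha)$.

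To pin down the remaining rows consistently, I would exploit multiplicativity: for any $p_1\le m_\sigma$ and $p_2\le m_\delta$ the vector $e_{p_1}\otimes e_{p_2}$ lies in $(V_\sigma\otimes V_\delta)^H$, so the product $d_\sigma^{p_1 q_1}\cdot d_\delta^{p_2 q_2}$ lies in $\Theta_H(G)$; expanding it via the Clebsch--Gordan decomposition (\ref{eq:3p5}) and applying $\omega$'s multiplicativity forces the corresponding rows of $U(\sigma)\otimes U(\delta)$ and of $C_{\sigma\delta}\bigl[\bigoplus_i U(\alpha_i)\bigr]C_{\sigma\delta}^{\dagger}$ to agree. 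After fixing the remaining (non-$H$-fixed) rows by any consistent completion, the unitarity of each $U(\alpha)$ follows from the hypothesis $\omega(f^*)=\omega(f)^*$, the full compatibility (\ref{eq:3p6}) holds, and Theorem~\ref{thm:3p2p2} supplies the desired $g$.

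The principal obstacle will be showing that the residual freedom in these completions corresponds precisely to a single left $H$-action on $g$. I would approach this by analyzing the linear system imposed by (\ref{eq:3p6}) on the unknown rows: the $H$-branching structure of each $D_\alpha$ should make the kernel of this system parameterized by a common element $h\in H$ acting simultaneously on all $V_\alpha$ via $D_\alpha(h)$. Uniqueness of the coset then follows immediately, since if two elements $g_1,g_2\in G$ both induce $\omega$ then $f(g_1)=f(g_2)$ for every $f\in\Theta_H(G)$, and because the spherical coefficients separate cosets this forces $Hg_1=Hg_2$.
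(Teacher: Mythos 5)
This theorem is not proved in the paper at all: it is the Iwahori--Sugiura duality theorem, cited from \cite{iwahorisugiura} and used as a black box (the paper's own contribution in this direction is Theorem~\ref{thm:3p3p5}, an equivalent reformulation whose proof \emph{invokes} Iwahori--Sugiura rather than deriving it). So your proposal must be judged on its own, and it has a genuine gap at exactly the step you flag as ``the principal obstacle.'' Your preliminary work is sound: with a convenient selection the left $H$-invariant coefficients are the rows $d_\alpha^{pq}$ with $p\le m_\alpha={\rm rank}(P_\alpha)$ (this is Lemma~\ref{lem:3p3p4}); the identity $(P_\alpha D_\alpha)(P_\alpha D_\alpha)^\dagger=P_\alpha$ together with conjugation-preservation and multiplicativity gives orthonormality of the known rows $\omega(P_\alpha D_\alpha)$; and because $C_{\sigma\delta}^\dagger$ maps $H$-fixed vectors of $V_\sigma\otimes V_\delta$ into $\oplus_i V_{\alpha_i}^H$, multiplicativity yields the \emph{projected} compatibility
\begin{equation*}
\omega(P_\sigma D_\sigma)\otimes\omega(P_\delta D_\delta)
=\left[P_\sigma\otimes P_\delta\right]C_{\sigma\delta}
\left[\omega(P_{\alpha_1}D_{\alpha_1})\oplus\cdots\oplus\omega(P_{\alpha_k}D_{\alpha_k})\right]C_{\sigma\delta}^{\dagger}.
\end{equation*}
These are precisely conditions (\ref{eq:3p12})--(\ref{eq:3p13}); you have re-derived the easy direction of Theorem~\ref{thm:3p3p5}.

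The gap is the passage from this projected data to full unitary matrices $U(\alpha)$ satisfying the unprojected relation (\ref{eq:3p6}) for \emph{all} pairs $\sigma,\delta$ simultaneously, which is what Theorem~\ref{thm:3p2p2} requires. You assert that ``any consistent completion'' of the unknown rows will do and that the residual freedom ``should'' be a single $H$-action, but you never show that even one globally consistent completion exists. This is not a routine verification: each $U(\alpha)$ can be completed to a unitary matrix in many ways, and (\ref{eq:3p6}) couples the unknown rows of $U(\sigma)$, $U(\delta)$ and all the $U(\alpha_i)$ across every tensor-product decomposition at once; proving that these infinitely many coupled constraints admit a common solution is essentially equivalent to the theorem you are trying to prove (it amounts to extending $\omega$ from $\Theta_H(G)$ to a conjugation-preserving character of all of $\Theta(G)$, which is the actual content of Iwahori--Sugiura). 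The heuristic about the kernel of a linear system being ``parameterized by a common $h\in H$'' describes the expected non-uniqueness of a solution, not its existence. Until you supply an existence argument for the completion (or an independent route, e.g.\ a Stone--Weierstrass/Gelfand argument identifying conjugation-preserving characters of $\Theta_H(G)$ with points of the compact space $G\backslash H$, which requires establishing positivity and boundedness of $\omega$), the proof is incomplete. Your final uniqueness argument, via separation of cosets by $\Theta_H(G)$, is fine.
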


We describe an equivalent formulation of the Iwahori-Sugiura theorem that is advantageous for our work.  
Several preliminary results are required for the new formulation, with some of the longer proofs being relegated
to the companion report \cite{kakaralaarxiv}.  

\begin{lemma}
\label{lem:3p3p2}
Any function $f\in\Theta_{H}(G)$ can be expressed as a unique finite linear combination of the left
$H$-invariant matrix coefficients of a given selection.  
\end{lemma}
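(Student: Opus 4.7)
The plan is to combine the Peter--Weyl linear independence of matrix coefficients with a Haar averaging projection over $H$. Starting from the fact (already invoked earlier in the paper from \cite{chevalley}, p.~189) that every $f\in\Theta(G)$ admits a unique finite expansion $f = \sum_{\alpha,p,q} c_{\alpha,pq}\,d_\alpha^{pq}$ in the matrix coefficients of the chosen selection $\{D_\alpha\}$, and noting that $\Theta_H(G)\subseteq\Theta(G)$, it suffices to show that when $f$ is left $H$-invariant the expansion can be rearranged so that only those $d_\alpha^{pq}$ which are themselves left $H$-invariant appear.

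To do so, I would introduce the Haar averaging operator $(Pf)(g)=\int_H f(hg)\,dh$, which is the orthogonal projection of $L_2(G)$ onto its left-$H$-invariant subspace. A direct calculation using $D_\alpha(hg)=D_\alpha(h)D_\alpha(g)$ yields
$$Pd_\alpha^{pq}(g) \;=\; \sum_r A^{\alpha}_{pr}\,d_\alpha^{rq}(g), \qquad A^{\alpha} \;=\; \int_H D_\alpha(h)\,dh,$$
and a standard argument identifies $A^\alpha$ with the orthogonal projection onto the $H$-fixed subspace $V_\alpha^H$ of the representation space of $D_\alpha$ (it is self-adjoint and idempotent, since $H$ is a group and the Haar measure on $H$ is invariant). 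Choosing the orthonormal basis of each $V_\alpha$ so that its first $\dim V_\alpha^H$ vectors span $V_\alpha^H$ --- a harmless choice, since representations inside an equivalence class are only defined up to a unitary change of basis --- makes $A^\alpha$ the projector onto the first $\dim V_\alpha^H$ coordinates. With this convention the individual $d_\alpha^{pq}$ are left $H$-invariant precisely when the row index $p$ lies in the ``invariant block'': the identity $d_\alpha^{pq}(hg)=d_\alpha^{pq}(g)$ for all $g$ is equivalent to $D_\alpha(h)^\dagger e_p = e_p$ for all $h\in H$.

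Since $Pf=f$ whenever $f$ is left $H$-invariant, applying $P$ to the Peter--Weyl expansion gives $f = \sum_{\alpha,p,q}c_{\alpha,pq}\,Pd_\alpha^{pq}$, a finite linear combination of left $H$-invariant matrix coefficients of the selection. Uniqueness is inherited directly: the $H$-invariant $d_\alpha^{pq}$ form a subset of the full family of matrix coefficients of the selection, which is linearly independent. The only delicate point is the basis adaptation --- without it, ``left $H$-invariant'' need not pick out individual matrix coefficients of the selection at all, but only $H$-invariant linear combinations within each row block. Once that adjustment is in place, averaging delivers existence and Peter--Weyl delivers uniqueness essentially for free.
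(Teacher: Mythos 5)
Your argument is correct, and it reaches the same conclusion by a noticeably different mechanism. The paper's proof substitutes $f(hg)=f(g)$ into the unique expansion $f=\sum c_\alpha^{pq}d_\alpha^{pq}$, expands $d_\alpha^{pq}(hg)=\sum_\ell d_\alpha^{p\ell}(h)d_\alpha^{\ell q}(g)$, and invokes linear independence in $g$ (for each fixed $h$) to conclude that every coefficient actually appearing is itself $H$-invariant. You instead average over $H$: applying $(Pf)(g)=\int_H f(hg)\,dh$ to the expansion replaces the row action by multiplication with $A^\alpha=\int_H D_\alpha(h)\,dh$, which is exactly the projection $P_\alpha$ the paper introduces in eq.~(\ref{eq:3p10}) immediately after this lemma; idempotence of $P$ then kills the non-invariant rows and keeps the invariant ones verbatim. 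Two things your route buys. First, it ties the lemma directly to the $P_\alpha$ machinery used throughout Section~4, rather than proving it in isolation. Second, and more substantively, you correctly flag that the statement only picks out \emph{individual} matrix coefficients after the basis of each $V_\alpha$ has been adapted so that $V_\alpha^H$ is spanned by the first $\mathrm{rank}(\alpha)$ standard basis vectors (the ``convenient selection'' of eq.~(\ref{eq:3p11})); for an arbitrary selection there may be no left $H$-invariant coefficients at all, only $H$-invariant combinations within a row block. The paper's appendix glosses over this: its coefficient-matching step really yields only that the coefficient vectors $(c_\alpha^{pq})_p$ are fixed by $D_\alpha(h)^{T}$, not the stated $d_\alpha^{p\ell}(h)=\delta_{p\ell}$, and the stronger conclusion requires exactly the basis adaptation you make explicit. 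Uniqueness in both treatments is the same appeal to Peter--Weyl linear independence.
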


The proof is given in the report \cite{kakaralaarxiv}.  

Let $G$, $H$, and $\{D_{\alpha}\}_{\alpha\in{\cal G}}$ be as before.  Let us define a corresponding sequence of 
matrices $\{P_{\alpha}\}_{\alpha\in{\cal G}}$ as follows:
\begin{equation}
P_{\alpha} = \int_{H} D_{\alpha}(h) dh,
\label{eq:3p10}
\end{equation}
where $dh$ denotes the normalized Haar measure on $H$.  It is easy to show that each $P_{\alpha}$ is a projection,
i.e., a self-adjoint matrix such that $P_{\alpha}P_{\alpha} = P_{\alpha}$ (\cite[pg 190]{hewittross}). Moreover, the 
projection matrices as defined above inherit some of the tensor product properties of the corresponding 
representations (\cite[pg 190]{hewittross}).
\begin{lemma}
\label{lem:3p3p3}
Let $\{P_{\alpha}\}_{\alpha\in{\cal G}}$ be as above.  For each $\sigma$, $\delta$, let $C_{\sigma\delta}$ be the 
Clebsch-Gordan matrix and $\alpha_1$, $\ldots$, $\alpha_k$ be the indices in the tensor product decomposition in
eq.~(\ref{eq:3p5}). Then
\begin{eqnarray*}
P_{\sigma}\otimes P_{\delta} &=& C_{\sigma\delta}\left[P_{\alpha_1}\oplus\cdots\oplus P_{\alpha_k}\right] C_{\sigma\delta}^{\dagger}
\left[P_{\sigma}\otimes P_{\delta}\right],\\
&=& \left[P_\sigma \otimes P_\delta\right]C_{\sigma\delta}\left[P_{\alpha_1} \oplus \cdots \oplus P_{\alpha_k}\right]C_{\sigma\delta}^{\dagger}.
\end{eqnarray*}
\end{lemma}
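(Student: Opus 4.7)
The plan is to introduce the auxiliary matrix
$$
Q_{\sigma\delta} \;=\; \int_{H} \bigl[D_{\sigma}(h)\otimes D_{\delta}(h)\bigr]\,dh,
$$
which is the single-variable analogue of the double integral $P_{\sigma}\otimes P_{\delta} = \int_{H}\!\int_{H} D_{\sigma}(h_1)\otimes D_{\delta}(h_2)\,dh_1\,dh_2$. Applying the integral to both sides of the Clebsch-Gordan decomposition $D_{\sigma}\otimes D_{\delta} = C_{\sigma\delta}[D_{\alpha_1}\oplus\cdots\oplus D_{\alpha_k}]C_{\sigma\delta}^{\dagger}$ and pulling the constant matrices $C_{\sigma\delta}$ outside immediately yields
$$
Q_{\sigma\delta} \;=\; C_{\sigma\delta}\bigl[P_{\alpha_1}\oplus\cdots\oplus P_{\alpha_k}\bigr]C_{\sigma\delta}^{\dagger}.
$$
So the lemma will follow once I show that $P_{\sigma}\otimes P_{\delta}$ is absorbed by $Q_{\sigma\delta}$ on either side, i.e.
$$
Q_{\sigma\delta}\bigl[P_{\sigma}\otimes P_{\delta}\bigr] \;=\; P_{\sigma}\otimes P_{\delta} \;=\; \bigl[P_{\sigma}\otimes P_{\delta}\bigr]Q_{\sigma\delta}.
$$

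To establish this absorption property, I will use the standard fact (a direct consequence of the left-invariance of the Haar measure on $H$) that $D_{\alpha}(h)P_{\alpha}=P_{\alpha}$ and $P_{\alpha}D_{\alpha}(h)=P_{\alpha}$ for every $h\in H$ and every $\alpha$. Applied to $\sigma$ and $\delta$ and combined via the tensor-product identity $(A_1 A_2)\otimes(B_1 B_2) = (A_1\otimes B_1)(A_2\otimes B_2)$, this gives
$$
\bigl[D_{\sigma}(h)\otimes D_{\delta}(h)\bigr]\bigl[P_{\sigma}\otimes P_{\delta}\bigr]
\;=\; \bigl[D_{\sigma}(h)P_{\sigma}\bigr]\otimes\bigl[D_{\delta}(h)P_{\delta}\bigr]
\;=\; P_{\sigma}\otimes P_{\delta},
$$
for every $h\in H$. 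Integrating this identity over $H$ with respect to $dh$ produces $Q_{\sigma\delta}[P_{\sigma}\otimes P_{\delta}] = P_{\sigma}\otimes P_{\delta}$, and the right-handed version is obtained symmetrically by multiplying on the right.

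Substituting the decomposition of $Q_{\sigma\delta}$ from the first paragraph into the absorption identities then yields the two asserted expressions for $P_{\sigma}\otimes P_{\delta}$. I do not anticipate a genuine obstacle here: the only subtlety is to keep track of which variable is being integrated (the lemma compares a double $H$-integral with a single $H$-integral of the tensor product), and the absorption argument is precisely what bridges the two.
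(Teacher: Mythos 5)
Your proof is correct and complete: the identity $Q_{\sigma\delta}=C_{\sigma\delta}[P_{\alpha_1}\oplus\cdots\oplus P_{\alpha_k}]C_{\sigma\delta}^{\dagger}$ follows by integrating the Clebsch--Gordan decomposition over $H$, and the absorption identities $Q_{\sigma\delta}[P_{\sigma}\otimes P_{\delta}]=P_{\sigma}\otimes P_{\delta}=[P_{\sigma}\otimes P_{\delta}]Q_{\sigma\delta}$ follow exactly as you argue from $D_{\alpha}(h)P_{\alpha}=P_{\alpha}D_{\alpha}(h)=P_{\alpha}$, the mixed-product property of tensor products, and the normalization $\int_{H}dh=1$. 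The paper itself gives no proof of this lemma (it defers to Hewitt--Ross, p.~190), so there is nothing to compare against; your argument is the standard one and fills that gap cleanly.
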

It proves convenient to apply the following similarity transformations to the $P$ matrices.  For each $\alpha$,
let ${\rm rank}(\alpha)$ denote the rank of $P_{\alpha}$, and let $I({\rm rank}(\alpha))$ be the diagonal matrix
whose first ${\rm rank}(\alpha)$ diagonal entries (from the upper left) are $1$, and the rest are $0$.  Then there
exists a unitary matrix $U(\alpha)$ such that (\cite[pg 195]{lantis}):
\begin{equation}
U(\alpha)P_{\alpha}U(\alpha)^{\dagger} = I({\rm rank}(\alpha)),
\end{equation}
If we apply the same similarity transformation to the representation $D_{\alpha}$, then it is easily seen that
\begin{equation}
U(\alpha) D_{\alpha}(h) U(\alpha)^{\dagger} = \left[ \oplus_{q=1}^{{\rm rank}(\alpha)} \tau(h) \right] \oplus D_{\alpha}^{H}(h), 
\quad h\in H.
\end{equation}
In the decomposition above, $\tau$ denotes the trivial representation\footnote{The trivial representation $\tau$ of a group $G$ is $\tau(g)=1$ for all $g\in G$. In Fourier terminology, the trivial representation gives the zero frequency of ``d.c.'' term.} of $H$, and the last term $D_{\alpha}^{H}$ is some
unitary representation of $H$ that does not contain $\tau$.  

Rather than starting with an arbitrary selection of $\{D_{\alpha}\}_{\alpha\in{\cal G}}$, suppose now that we 
choose one in which each matrix $D_{\alpha}(h)$ is exactly equal to a direct sum where the first ${\rm rank}(\alpha)$
representations that appear in the sum are $\tau$, i.e.,
\begin{equation}
D_{\alpha}(h) = \left[\oplus_{q=1}^{{\rm rank}(\alpha)} \tau(h)\right]\oplus D_{\alpha}^{H}(h),\quad
h\in H.
\label{eq:3p11}
\end{equation}
We always obtain such a {\em convenient selection} (that is what we shall call it henceforth) from a given
one by applying similarity transformations as described above. For a convenient selection, the projection
matrices in eq.~(\ref{eq:3p10}) are simply $P_{\alpha} = I({\rm rank}(\alpha))$ for all $\alpha$.  

\begin{lemma}
\label{lem:3p3p4} Let $\{D_{\alpha}\}_{\alpha \in {\cal G}}$ be a convenient selection and let $\{P_\alpha\}_{\alpha\in{\cal G}}$ be
its projections.  The nonzero coefficients in the matrices $\{ P_{\alpha} D_{\alpha} \}$ are precisely those coefficients
of the selection that are left $H$-invariant.  
\end{lemma}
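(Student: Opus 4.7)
The plan is to exploit the explicit block form of a convenient selection and then characterize left $H$-invariance by Haar averaging, so the two descriptions of left $H$-invariant coefficients can be matched entry-by-entry.

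First I would use the defining property of a convenient selection: $P_\alpha = I(\text{rank}(\alpha))$, so the product $P_\alpha D_\alpha(g)$ is the matrix whose first $\text{rank}(\alpha)$ rows agree with those of $D_\alpha(g)$ and whose remaining rows are identically zero. Hence the potentially nonzero entries of $P_\alpha D_\alpha$ are exactly the coefficients $d_\alpha^{pq}(\cdot)$ with $1 \le p \le \text{rank}(\alpha)$, $1 \le q \le \dim(\alpha)$, and among these the ones that are nonzero as functions on $G$ are exactly the $d_\alpha^{pq}$ that do not vanish identically.

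Next I would show that for any convenient-selection coefficient, the function $(P_\alpha D_\alpha(g))_{pq}$ is precisely the left-$H$ average of $d_\alpha^{pq}$. Using the multiplicativity $D_\alpha(hg) = D_\alpha(h)D_\alpha(g)$, interchanging the Haar integral on $H$ with the finite matrix sum gives
\begin{equation}
\int_H d_\alpha^{pq}(hg)\,dh = \sum_r \Bigl(\int_H D_\alpha(h)\,dh\Bigr)_{pr} D_\alpha(g)_{rq} = (P_\alpha D_\alpha(g))_{pq}.
\end{equation}
Therefore $d_\alpha^{pq}$ is left $H$-invariant if and only if $d_\alpha^{pq} = (P_\alpha D_\alpha)_{pq}$ as functions on $G$.

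Finally I would combine the two observations. If $p \le \text{rank}(\alpha)$, then by the first step $(P_\alpha D_\alpha)_{pq} = d_\alpha^{pq}$ automatically, so $d_\alpha^{pq}$ is left $H$-invariant; and conversely if $p > \text{rank}(\alpha)$, then $(P_\alpha D_\alpha)_{pq} \equiv 0$, so the second step forces $d_\alpha^{pq}$ to be left $H$-invariant only when it vanishes identically. Consequently the nonzero coefficients appearing in $P_\alpha D_\alpha$ are exactly the nonzero left $H$-invariant matrix coefficients of the chosen convenient selection, which is the claim.

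The only genuine subtlety is not an obstacle but a bookkeeping point: matching the two characterizations requires paying attention to the possibility that an individual entry $d_\alpha^{pq}$ could vanish identically even when $p \le \text{rank}(\alpha)$, so the ``nonzero'' qualifier in the statement must be tracked on both sides of the equivalence. Beyond that, the proof is a direct consequence of the convenient-selection normalization together with the standard Haar-averaging identity for projections onto $H$-fixed vectors.
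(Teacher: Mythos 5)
Your proof is correct, and it reaches the converse direction by a different mechanism than the paper. Both arguments share the first half: for a convenient selection $P_\alpha = I(\mathrm{rank}(\alpha))$, so the nonzero entries of $P_\alpha D_\alpha$ sit in the first $\mathrm{rank}(\alpha)$ rows, and these are left $H$-invariant because $P_\alpha D_\alpha(hg) = P_\alpha D_\alpha(g)$. For the converse, the paper expands $d_\alpha^{pq}(hg) = \sum_\ell d_\alpha^{p\ell}(h) d_\alpha^{\ell q}(g)$ and uses the linear independence of the coefficient functions (in the variable $g$) to force $d_\alpha^{p\ell}(h) = \delta_{p\ell}$ on $H$, then reads off from the block form (\ref{eq:3p11}) that this can happen only when $p \le \mathrm{rank}(\alpha)$. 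You instead average the invariance relation over $H$ to identify any invariant coefficient with the corresponding entry of $P_\alpha D_\alpha$, so that rows beyond $\mathrm{rank}(\alpha)$ could carry an invariant coefficient only if that coefficient vanished identically. Your route is slightly slicker and stays entirely at the level of the projection identity; the paper's is more explicit about which entries of $D_\alpha(h)$ are being pinned down. The one point you flag as ``bookkeeping'' should be closed explicitly: no matrix coefficient $d_\alpha^{pq}$ of an irreducible representation in a selection vanishes identically, since by the Peter--Weyl theorem the coefficients form an orthogonal (hence linearly independent) family in $L_2(G)$. With that observation your ``nonzero left $H$-invariant coefficients'' coincide with ``left $H$-invariant coefficients,'' which is the form in which the lemma is stated, and the vacuous case you worry about for $p \le \mathrm{rank}(\alpha)$ cannot occur.
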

The proof is given in the accompanying report \cite{kakaralaarxiv}.  

Since the left $H$-invariant coefficients are a basis for $\Theta_{H}(G)$, any linear map $\omega:\Theta_{H}(G)\rightarrow\field{C}$
is uniquely determined by the values that it gives to those coefficients.  For each matrix $P_{\alpha}D_{\alpha}$, the map $\omega$
produces a corresponding matrix $\eta(P_{\alpha} D_{\alpha})$. We now determine conditions in terms of the matrices $\eta(PD)$
under which $\omega$ is not only linear but also multiplicative and conjugate-preserving.  In the following, we use the standard
inner product $<\zeta_1,\zeta_2> = \zeta_1 \zeta_2^{\dagger}$ for complex-valued row vectors $\zeta_1$, $\zeta_2$, and the
standard norm $\| \zeta \| = (<\zeta,\zeta>)^{\frac{1}{2}}$.  

\begin{theorem}
\label{thm:3p3p5}
Let $\{D_\alpha\}_{\alpha\in{\cal G}}$ be a convenient selection and let $\{P_{\alpha}\}_{\alpha\in{\cal G}}$ be its projections.
Any linear map $\omega: \Theta_{H}(G)\rightarrow \field{C}$ is both multiplicative and conjugate-preserving if and only if
the following two conditions hold for all $\sigma$, $\delta$, $\alpha$ in ${\cal G}$:
\begin{eqnarray}
\eta(P_\sigma D_\sigma) \otimes \eta(P_\sigma D_\sigma) &=& 
\left[P_\sigma \otimes P_\delta\right] C_{\sigma\delta} \left[\eta(P_{\alpha_1}D_{\alpha_1})\oplus
\cdots \right.\nonumber\\
& & \oplus\eta(P_{\alpha_k}D_{\alpha_k})]C_{\sigma\delta}^{\dagger}; \label{eq:3p12}\\
\eta(P_{\alpha}D_{\alpha})\eta(P_{\alpha}D_{\alpha})^{\dagger} &=& P_{\alpha}.
\label{eq:3p13}
\end{eqnarray}
In eq.~(\ref{eq:3p12}), the matrix $C_{\sigma\delta}$ and the indices $\alpha_1$,\ldots,$\alpha_k$ are as in eq.~(\ref{eq:3p5}).  
\end{theorem}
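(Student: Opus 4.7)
The plan is to prove both directions by translating matrix-function identities on $\Theta_H(G)$ into scalar relations through the linearity, multiplicativity, and conjugate-preservation of $\omega$, and then reverse-engineering those same steps in the other direction.

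For the forward direction (necessity), I would start from the matrix identity $(P_\alpha D_\alpha)(P_\alpha D_\alpha)^\dagger = P_\alpha D_\alpha D_\alpha^\dagger P_\alpha = P_\alpha$, which holds pointwise as a matrix of functions (using unitarity of $D_\alpha$ together with $P_\alpha^2 = P_\alpha = P_\alpha^\dagger$). Applying $\omega$ entrywise, using multiplicativity on each product and conjugate-preservation to pull scalar conjugations outside $\omega$, immediately yields (\ref{eq:3p13}). For (\ref{eq:3p12}), I would start from the trivial equality $(P_\sigma D_\sigma) \otimes (P_\delta D_\delta) = (P_\sigma \otimes P_\delta)(D_\sigma \otimes D_\delta)$, insert the tensor-product decomposition (\ref{eq:3p5}), and use the second identity of Lemma~\ref{lem:3p3p3} to rewrite each $D_{\alpha_j}$ appearing in the result as $P_{\alpha_j} D_{\alpha_j}$. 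Applying $\omega$ with multiplicativity on the left and linearity on the right then gives (\ref{eq:3p12}).

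For the reverse direction (sufficiency), I would first establish multiplicativity. By Lemmas~\ref{lem:3p3p2} and~\ref{lem:3p3p4}, every element of $\Theta_H(G)$ is uniquely a finite linear combination of the nonzero entries of the various $P_\alpha D_\alpha$, so by bilinearity it suffices to verify $\omega(d_1 d_2) = \omega(d_1)\omega(d_2)$ when $d_1$, $d_2$ are nonzero entries of $P_\sigma D_\sigma$ and $P_\delta D_\delta$ respectively. These products are precisely the entries of $(P_\sigma D_\sigma) \otimes (P_\delta D_\delta)$, which by Lemma~\ref{lem:3p3p3} and (\ref{eq:3p5}) equals $(P_\sigma \otimes P_\delta) C_{\sigma\delta}[P_{\alpha_1}D_{\alpha_1} \oplus \cdots \oplus P_{\alpha_k}D_{\alpha_k}] C_{\sigma\delta}^\dagger$; applying $\omega$ linearly and invoking (\ref{eq:3p12}) reproduces $\omega(P_\sigma D_\sigma) \otimes \omega(P_\delta D_\delta)$, whose entries are exactly the products $\omega(d_1)\omega(d_2)$.

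The main obstacle is deriving conjugate-preservation from (\ref{eq:3p13}). With multiplicativity already in hand, applying it to $(P_\alpha D_\alpha)(P_\alpha D_\alpha)^\dagger = P_\alpha$ gives $\sum_k \omega(A_{ik})\,\omega(A_{jk}^*) = (P_\alpha)_{ij}$ with $A = P_\alpha D_\alpha$, whereas (\ref{eq:3p13}) rewrites as $\sum_k \omega(A_{ik})\,\omega(A_{jk})^* = (P_\alpha)_{ij}$. Subtracting shows that each column of $\omega(A^*) - \omega(A)^*$ lies in the right null space of $\omega(A)$; since $\omega(A)$ generally has nontrivial null space (of dimension ${\rm dim}(\alpha) - {\rm rank}(\alpha)$), a single $\alpha$ does not suffice to conclude. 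I expect to close the gap by invoking a fixed intertwiner between $D_\alpha^*$ and the class $D_{\bar\alpha}$ of the convenient selection containing it, writing each $A_{jk}^*$ as a linear combination of entries of $P_{\bar\alpha} D_{\bar\alpha}$, and then combining the orthonormality constraints from (\ref{eq:3p13}) applied at both $\alpha$ and $\bar\alpha$ to force the difference to vanish. Ensuring that this linking of $\omega$ on $D_\alpha$-coefficients with $\omega$ on $D_{\bar\alpha}$-coefficients is consistent across all $\alpha$ is the technical heart of the argument.
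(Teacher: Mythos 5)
Your proposal follows essentially the same route as the paper's proof: the necessity direction and the multiplicativity half of the converse are the paper's argument (the paper actually gets necessity by invoking Iwahori--Sugiura to realize $\omega$ as evaluation at a coset, but your direct computation is equivalent), and the step you flag as the technical heart is exactly where the paper does its real work. The gap you identify does close the way you anticipate: the paper shows that the intertwiner $A_\alpha$ with $D_\alpha^* = A_\alpha D_{\bar\alpha} A_\alpha^\dagger$ is symmetric and, because conjugation preserves left $H$-invariance of matrix coefficients, block-diagonal with respect to $P_\alpha$, so that condition (\ref{eq:3p13}) applied at $\bar\alpha$ transfers to give $\|\omega(\zeta_\alpha^*)\| = 1$ for every nonzero row $\zeta_\alpha$ of $P_\alpha D_\alpha$. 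Combining this with $\|\omega(\zeta_\alpha)\| = 1$ from (\ref{eq:3p13}) and with $\langle\omega(\zeta_\alpha),\omega(\zeta_\alpha^*)^*\rangle = 1$ (which is your multiplicativity identity $\sum_k\omega(A_{pk})\omega(A_{pk}^*)=1$), the equality case of Cauchy--Schwarz forces $\omega(\zeta_\alpha) = \omega(\zeta_\alpha^*)^*$ row by row; this is a cleaner finish than chasing the null space of $\omega(A)$, since the missing constraint is precisely the unit norm of $\omega(\zeta_\alpha^*)$.
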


The proof is given in the report \cite{kakaralaarxiv}.  

Let $f$ be a function in $L_{1}(G)$ such that $f(hg) = f(g)$ for all $h$ in a given closed subgroup $H$
of $G$.  The translation property of the Fourier transform ensures that each Fourier coefficient $F(\alpha)$
satisfies the identity $F(\alpha) = F(\alpha) D_{\alpha} (h)$ for all $h$ in $H$.  Integrating over $h$,
we find that $F(\alpha) = F(\alpha)P_{\alpha}$ for all $\alpha$.  We say that each Fourier coefficient 
$F(\alpha)$ is of {\em maximal $H$-rank} if the rank of $F(\alpha)$ equals the rank of $P_{\alpha}$.  
We now show that if $f$ is any left $H$-invariant function whose Fourier coefficients $F$ all have
maximal rank, then $f$ is uniquely determined by its bispectrum $A_{3,f}$ up to a left
translation.  The proof of our assertion uses the standard notation from linear algebra \cite{lantis}. 
For each matrix $A$, let ${\rm image}(A)$ and ${\rm ker}(A)$ denote respectively the image and
kernel of $A$.  For each $\alpha\in{\cal G}$, let ${\cal H}_{\alpha}$ denote the Hilbert space on
which the corresponding representations $D_{\alpha}$ act.

\begin{theorem}
\label{thm:3p3p6}
Let $G$ be any compact group, and let $H$ be any closed subgroup of $G$. Let $r\in L_{1}(G)$ be invariant
under left $H$-translations.  If the Fourier coefficients $\{R(\alpha)\}_{\alpha\in{\cal G}}$ all have
maximal $H$-rank, then $a_{3,r} = a_{3,s}$ for some $s\in L_{1}(G)$ if and only if there exists
$x\in G$ such that $s(g) = r(xg)$ for all $g$.  
\end{theorem}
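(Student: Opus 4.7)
The plan is to adapt the proof of Theorem~\ref{thm:3p2p4} to the homogeneous-space setting, replacing Theorem~\ref{thm:3p2p2} and the Tannaka-Krein theorem by Theorem~\ref{thm:3p3p5} and the Iwahori-Sugiura theorem. The easy direction $s(g)=r(xg)\Rightarrow a_{3,s}=a_{3,r}$ follows from the left invariance of the Haar measure. For the converse I work in a convenient selection, so that $P_\alpha = I({\rm rank}(\alpha))$. Left $H$-invariance of $r$ gives $R(\alpha)=R(\alpha)P_\alpha$, and the maximal $H$-rank hypothesis says ${\rm rank}(R(\alpha))={\rm rank}(P_\alpha)$; together these assert that $R(\alpha)$ is injective on ${\rm image}(P_\alpha)$ and vanishes on ${\rm ker}(P_\alpha)$, while $R(\alpha)^\dagger$ has image exactly ${\rm image}(P_\alpha)$.

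Mirroring the proof of Theorem~\ref{thm:3p2p4}, applying Lemma~\ref{lem:3p2p3} to $a_{3,r}=a_{3,s}$ with $\sigma=\delta={\bf 1}$ yields $R({\bf 1})=S({\bf 1})$, and with $\delta={\bf 1}$ yields $R(\sigma)R(\sigma)^\dagger = S(\sigma)S(\sigma)^\dagger$ for every $\sigma$. Since the two sides share column space and rank, polar decomposition produces a unitary matrix $U(\sigma)$ with $S(\sigma)=R(\sigma)U(\sigma)$. Substituting $S=RU$ into the identity of Lemma~\ref{lem:3p2p3}, I factor out $R(\sigma)\otimes R(\delta)$ on the left and $R(\alpha_1)^\dagger\oplus\cdots\oplus R(\alpha_k)^\dagger$ on the right, and cancel those rank-deficient factors by restricting to the image of $P_\sigma\otimes P_\delta$ on the left and ${\rm image}(P_{\alpha_1})\oplus\cdots\oplus{\rm image}(P_{\alpha_k})$ on the right (as permitted by the kernel/image identifications above). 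After simplification using Lemma~\ref{lem:3p3p3}, the result rearranges into condition~(\ref{eq:3p12}) for the linear map $\omega:\Theta_H(G)\to\field{C}$ defined by $\omega(P_\alpha D_\alpha) := P_\alpha U(\alpha)$; condition~(\ref{eq:3p13}) is immediate from unitarity of $U(\alpha)$, since $P_\alpha U(\alpha) U(\alpha)^\dagger P_\alpha = P_\alpha^2 = P_\alpha$.

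Theorem~\ref{thm:3p3p5} then certifies that $\omega$ is a conjugate-preserving algebra homomorphism of $\Theta_H(G)$, and the Iwahori-Sugiura theorem supplies a coset $Hx$ such that $\omega(f)=f(Hx)$ for all $f\in\Theta_H(G)$; in particular, $P_\alpha U(\alpha) = P_\alpha D_\alpha(x)$ for every $\alpha$. Therefore $S(\alpha)=R(\alpha)U(\alpha)=R(\alpha)P_\alpha D_\alpha(x)=R(\alpha)D_\alpha(x)$, and the Fourier translation property concludes $s(g)=r(xg)$ for all $g$. The main obstacle will be the cancellation step: the factors $R(\sigma)\otimes R(\delta)$ and $R(\alpha_1)^\dagger\oplus\cdots\oplus R(\alpha_k)^\dagger$ are square but rank-deficient, so the direct cancellation used in Theorem~\ref{thm:3p2p4} is not available, and the identity that emerges from the bispectrum equation is only the ``sandwiched'' version with the $P_\alpha$'s inserted. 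Converting this sandwiched identity into the clean form~(\ref{eq:3p12}) requires careful use of Lemma~\ref{lem:3p3p3} together with unitarity, and uses the fact that only $P_\alpha U(\alpha)$, not the free extension of $U(\alpha)$ off ${\rm image}(P_\alpha)$, enters the duality criterion.
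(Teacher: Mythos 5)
Your proposal is correct and follows essentially the same route as the paper's proof in Appendix~\ref{app:thm3p3p6}: recover $R(\mathbf{1})=S(\mathbf{1})$ and $S(\sigma)=R(\sigma)U(\sigma)$ from the bispectrum identity, use the maximal $H$-rank hypothesis to cancel the rank-deficient factors via ${\rm image}(R(\alpha)^{\dagger})=P_{\alpha}({\cal H}_{\alpha})$ and Lemma~\ref{lem:3p3p3}, reduce the sandwiched identity to condition~(\ref{eq:3p12}) for $\omega(P_{\alpha}D_{\alpha})=P_{\alpha}U(\alpha)$, and invoke Theorem~\ref{thm:3p3p5} with Iwahori--Sugiura. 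You also correctly identify the one genuinely delicate step (un-sandwiching the projections, which the paper handles via the observation that $P=PUP$ forces $UP=P$ for unitary $U$), so the plan matches the paper's argument in both structure and substance.
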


The proof is given in the report \cite{kakaralaarxiv}.

In the theorem above, we did not require that the function $s$ also be left $H$-invariant. (Equality of bispectra may hold regardless of whether both functions are $H$-invariant.) Suppose now that two left $H$-invariant
functions $r$, $s$ are such that both have maximal $H$-rank coefficients and both have exactly the same bispectrum.  Theorem~\ref{thm:3p3p6} demonstrates that under those conditions, there exits $x\in G$
such that $s(g) = r(xg)$ for all $g$.  Yet the element $x$ cannot be arbitrary, for $s$ is left $H$-invariant,
and thus $s(hg)=s(g)$, implying that $r(xhg)=r(xg)$ for all $h\in H$ and $g\in G$.  But since $r$ is also
left $H$-invariant, we must have $r(xg) = r(hxg)$, and thus $r(xhg)=r(hxg)$ for all $g$ and $h$. The last identity
is always satisfied if $x$ lies in the {\em normalizer} of $H$ in $G$, which is the subgroup $N_{H}$ of $G$
defined as follows:
\begin{equation}
N_{H} = \{ x\in G: xH = Hx\}.
\end{equation}
(The normalizer of $H$ is the largest subgroup $N_{H}$ of $G$ such that $G$ itself is a normal subgroup of $N_{H}$.)
In fact, we show that $x$ {\em must} lie in $N_{H}$ in the following theorem.

\begin{theorem}
\label{thm:3p3p7}
Let $r$, $s$ in $L_{1}(G)$ be two left $H$-invariant functions whose Fourier coefficients $R(\alpha)$ and $S(\alpha)$ 
both have maximal $H$-rank for all $\alpha$. Then $a_{3,r} = a_{3,s}$ if and only if $s(g)=r(xg)$ for some $x\in N_{H}$.
\end{theorem}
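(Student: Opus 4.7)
The easy direction is routine: if $x\in N_H$ then $xh = h'x$ for some $h'\in H$, so $s(hg) = r(xhg) = r(h'xg) = r(xg) = s(g)$, confirming $s$ is left $H$-invariant, and the translation invariance of the triple correlation yields $a_{3,s} = a_{3,r}$.

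For the converse, Theorem~\ref{thm:3p3p6} already supplies some $x\in G$ with $s(g) = r(xg)$, so the task reduces to showing $x\in N_H$. My plan is to pass to the Fourier side. The translation law gives $S(\alpha) = R(\alpha)D_\alpha(x)$, while left $H$-invariance of $r$ and $s$ forces $R(\alpha)P_\alpha = R(\alpha)$ and $S(\alpha)P_\alpha = S(\alpha)$, so both $\ker R(\alpha)$ and $\ker S(\alpha)$ contain $\ker P_\alpha$. The maximal $H$-rank hypothesis upgrades these inclusions to equalities $\ker R(\alpha) = \ker S(\alpha) = \ker P_\alpha$. Combining these with $\ker S(\alpha) = D_\alpha(x)^{-1}(\ker R(\alpha))$ shows that $D_\alpha(x)$ preserves $\ker P_\alpha$, and by unitarity also preserves $V_\alpha^H := \mathrm{image}(P_\alpha)$. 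A short computation (tracking $v\in V_\alpha^H$ through $x^{-1}$, $h$, and $x$) then yields that $D_\alpha(xhx^{-1})$ acts as the identity on $V_\alpha^H$ for every $\alpha\in{\cal G}$ and every $h\in H$.

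The remaining, and crucial, step is the sub-lemma: if $g\in G$ satisfies $D_\alpha(g)v = v$ for all $v\in V_\alpha^H$ and all $\alpha\in{\cal G}$, then $g\in H$. I would prove this by working in a convenient selection, in which $V_\alpha^H = \mathrm{span}(e_1,\ldots,e_{\mathrm{rank}(\alpha)})$. The hypothesis on $g$ makes the first $\mathrm{rank}(\alpha)$ columns of $D_\alpha(g)$ equal to $e_1,\ldots,e_{\mathrm{rank}(\alpha)}$; unitarity then forces the first $\mathrm{rank}(\alpha)$ entries of every remaining column to vanish. Consequently the left $H$-invariant matrix coefficients $d_\alpha^{pq}$ (those with $p\leq\mathrm{rank}(\alpha)$, by Lemma~\ref{lem:3p3p4}) satisfy $d_\alpha^{pq}(g) = \delta_{pq} = d_\alpha^{pq}(e)$. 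By Lemma~\ref{lem:3p3p2} these coefficients span $\Theta_H(G)$, so the evaluation homomorphisms $\omega_g,\omega_e:\Theta_H(G)\to\field{C}$ coincide; both preserve conjugation, so the Iwahori--Sugiura theorem forces $Hg = He$, i.e., $g\in H$. Applying the sub-lemma to $g = xhx^{-1}$ for each $h\in H$ gives $xHx^{-1}\subseteq H$, and rerunning the whole argument with $r$ and $s$ swapped (via $r(g) = s(x^{-1}g)$) gives $x^{-1}Hx\subseteq H$; the two inclusions combine to $xHx^{-1} = H$, so $x\in N_H$. The main obstacle is the sub-lemma --- specifically, the combined use of unitarity and the structure of a convenient selection to pin down the left $H$-invariant matrix coefficients of $g$, after which Iwahori--Sugiura does the remaining work.
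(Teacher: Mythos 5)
Your proof is correct and follows essentially the same route as the paper's: invoke Theorem~\ref{thm:3p3p6} to obtain $x$ with $s(g)=r(xg)$, use the maximal $H$-rank hypothesis to show that $D_{\alpha}(x)$ is block-diagonal with respect to $\mathrm{image}(P_{\alpha})\oplus\ker(P_{\alpha})$, deduce that the conjugates $xhx^{-1}$ fix every left $H$-invariant matrix coefficient, and conclude $xHx^{-1}=H$ from the separating power of those coefficients (Lemma~\ref{lem:3p3p4} together with Iwahori--Sugiura). The only differences are presentational: you reach block-diagonality through kernel equalities rather than the matrix identity $P_{\alpha}D_{\alpha}(x)=P_{\alpha}D_{\alpha}(x)P_{\alpha}$, and your explicit sub-lemma (plus the symmetric rerun giving both inclusions $xHx^{-1}\subseteq H$ and $x^{-1}Hx\subseteq H$) spells out details that the paper compresses into its terse appeal to Lemma~\ref{lem:3p3p4}.
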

\begin{proof}
The ``if'' assertion is shown above, so we prove the ``only if'' part.  Suppose that $a_{3,r}=a_{3,s}$, and that $r$, $s$,
both have maximal $H$-rank coefficients.  Under those conditions, Theorem \ref{thm:3p3p6} shows that there exists $x\in G$
such that $r(g) = s(xg)$ for all $g$.  Then $R(\alpha)=S(\alpha)D_{\alpha}(x)$ for all $\alpha\in{\cal G}$.  Furthermore, the left
invariance of $r$ implies that $R(\alpha)=R(\alpha)P_{\alpha}$ for each $\alpha$,  Thus 
$S(\alpha)D_{\alpha}(x) = S(\alpha)D_{\alpha}(x)P_{\alpha}$ for each $\alpha$, and combining that with the identity
$S(\alpha) = S(\alpha)P_{\alpha}$ yields $S(\alpha)P_{\alpha}D_{\alpha}(x) = S(\alpha)P_{\alpha}D_{\alpha}(x)P_{\alpha}$,
and thus $S(\alpha)\left[P_{\alpha}D_{\alpha}(x) - P_{\alpha}D_{\alpha}(x)P_{\alpha}\right] = 0$.  By the maximal 
$H$-rank hypothesis, we obtain that 
\begin{equation}
P_{\alpha}D_{\alpha}(x) = P_{\alpha}D_{\alpha}(x)P_{\alpha}.
\end{equation}
Since $P_{\alpha}=I({\rm rank}(\alpha))$ for a convenient selection, the element $x$ satisfies the above equality
if and only if the unitary matrix $D_{\alpha}(x)$ is the direct sum of two smaller unitary matrices, the first with
dimensions ${\rm rank}(\alpha)\times {\rm rank}(\alpha)$ and the second with dimensions
$(n-{\rm rank}(\alpha))\times (n-{\rm rank}(\alpha))$. For such an $x$, it follows for any $h\in H$ that
\begin{equation}
P_{\alpha}D_{\alpha}(x)^{\dagger} D_{\alpha}(h) D_{\alpha}(x) = P_{\alpha} D_{\alpha}(x^{-1} h x) = P_{\alpha}.
\end{equation}
But we now see by Lemma \ref{lem:3p3p4} that $P_{\alpha}D_{\alpha}(x^{-1} h x) = P_{\alpha}$ if and only if 
$x^{-1} h x \in H$. The last inclusion holds for all $h\in H$, and thus $x^{-1} H x = H$, or equivalently,
$x\in N_{H}$. \qquad\end{proof}

In the interesting special case when $G$ is the group $SO(3)$ and $H$ is the subgroup of rotations that fix the 
$z$-axis, we have that $N_{H}=H$.  In that case, if $r$ is any left $H$-invariant function with maximal
$H$-rank coefficients, then there are no other left $H$-invariant functions with the same bispectrum
besides $r$ itself.  However, that does not mean that the bispectrum uniquely determines $r$: any function
$s$ such that $s(g) = r(xg)$ on $G$ has the same bispectrum, although $s$ is not necessarily $H$-invariant. 

If $G=SO(3)$ and $H$ as above, then the maximal $H$-rank condition is easy to satisfy.  Here it is well-known
that ${\rank}(P_{\alpha}) = 1$ for all $\alpha \in {\cal G}$ (\cite{varshalovich}).  Thus an arbitrary left 
$H$-invariant function $r$ has maximal $H$-rank coefficients if for all $\alpha$, the matrix $R(\alpha)$ contains
at least one nonzero coefficient.  That is evidently true if any noise is present in measuring $r$.

\section{Reconstruction algorithms}
\label{sec:algos}

The completeness theory for arbitrary compact groups in the preceding sections can be refined further for the special
case when the group is the $3$-D rotation group $SO(3)$.  We provide in this section a constructive algorithm for recovering a function from its bispectrum on $SO(3)$ . The algorithm was first outlined in an earlier conference paper \cite{kakarala93}.  It  relies on the bispectrum formula (\ref{eq:biss03}), in which the indices $\sigma$, $\delta$ to the bispectrum $A_{3,f}$ are nonnegative integers.  Setting $\delta=1$, we obtain from (\ref{eq:biss03}) insight into how a recursive algorithm may be built:
\begin{eqnarray}
A_{3,f}(\ell-1,1) &=& F(\ell-1)\otimes F(1) C_{\ell-1,1}\left[ F(\ell) \oplus \right.\nonumber\\
& & F(\ell-1) \oplus F(\ell-2) ] C_{\ell-1,1}^{\dagger}
\label{eq:recurse}
\end{eqnarray}
The above suggests that if we know $F(1)$, $\ldots$, $F(\ell-1)$, then we may recover $F(\ell)$.  We take advantage of that insight in the following algorithm

The algorithm requires the assumption that $f: SO(3)\rightarrow \field{R}$ has nonsingular Fourier coefficients $F(\ell)$ for $\ell=0,\ldots, L$.   With that assumption, it proceeds as follows:
\begin{enumerate}
\item By (\ref{eq:biss03}), we have that $A_{3,f}(0,0)=F(0)^{3}$ for the real number $F(0)$, and consequently, 
\beq
F(0) = \sqrt[3]{A_{3,f}(0,0)}
\eeq
\item We estimate $F(1)$.  From (\ref{eq:biss03}), we have that 
\beq
\frac{A_{3,f}(1,0)}{F(0)} = F(1)F(1)^{\dagger}
\eeq
Let $\hat{F}(1)$ denote the square root of the positive definite matrix on the right.  We know that $\hat{F}(1)$ is also positive definite  \cite[pg 190]{lantis},  In the Appendix, it is shown that there exists $g\in SO(3)$ such that $\hat{F}(1) = F(1)D_{1}(g)$.  
\item If $L=1$, then we are done.  Otherwise, we employ the recursion (\ref{eq:recurse}).  Since we know $\hat{F}(1)$ and $A_{3,f}(1,1)$, we obtain $\hat{F}(2)$ from 
\beq
C_{11}^{\dagger}\left[ \hat{F}(1) \otimes \hat{F}(1) \right]^{-1} A_{3,f}(1,1) C_{11}
\eeq
Note that the inverse is possible because $\hat{F}$ is nonsingular and so is the Kronecker product in brackets.  After substituting $\hat{F}(1)=F(1)D_1(g)$, we obtain that the above simplifies to 
\beq
\left[ D_{2}(g)^{\dagger} F(2)^{\dagger}\right] \oplus  \left[ D_{1}(g)^{\dagger} F(1)^{\dagger}\right] \oplus F(0).
\eeq
Since $F(2)$ is $2\cdot 2 +1 = 5$ dimensional, we denote the upper left $5\times 5$ submatrix of the above $9\time 9$ matrix as $\hat{F}(2)$.  Note that the value of $g$ remains unknown at this point, but it is the same value of $g$ for both $\hat{F}(2)$ and $\hat{F}(1)$.  
\item For $\ell\leq L$, we obtain $\hat{F}(\ell)$ using the equation
\beq
C_{\ell-1,1}^{\dagger}\left[ \hat{F}(\ell-1) \otimes \hat{F}(1) \right]^{-1} A_{3,f}(\ell-1,1) C_{\ell-1,1}
\eeq
After inserting (\ref{eq:recurse}) and simplifying, we get that the upper left $2\ell+1$-dimensional submatrix is $\hat{F}(\ell)=F(\ell)D_{\ell}(g)$.
\end{enumerate}

The algorithm allows us to state the following, which extends Theorem~\ref{thm:3p2p4} to the bandlimited case.  

\begin{theorem}
\label{thm:bandlimited}
Let $L>0$, suppose that $r$ on $SO(3)$ has nonsingular Fourier coefficients $R(\ell)$ for $\ell \leq L$ and $R(\ell)=0$ for $\ell > L$, then $a_{3,r}=a_{3,s}$ if and only if there exists $x\in G$ such that $r(g)=s(xg)$ for all $g\in G$.
\end{theorem}

We should note that by Lemma~\ref{lem:bisequali2}, the same theorem holds if we replace $a_{3,r}$ by the second order invariants $I^{2}_r$.  The advantage of using the bispectrum in place of $I^{2}_r$ is that it helps us to see the recursive structure (\ref{eq:recurse}) clearly.  

The algorithm serves two purposes: to prove Theorem~\ref{thm:bandlimited}, and to show a constructive method to recover a signal from its bispectrum, which is more illuminating than an existence proof of recovery as used in Theorem 2. The practicality of the algorithm is not known; whether the recursive nature of the algorithm or the repeated use of matrix inverse lead to numerical instability would be an interesting question to explore.  It is worth noting that similar recursive algorithms have been devised when $G=\field{R}$, and shown to be practically useful, not only theoretically interesting \cite{bartelt}.

\section{Summary and future directions}

This paper derives completeness properties of the bispectrum for functions defined on compact groups and their homogeneous spaces. A matrix form of the bispectrum is derived, and it is shown that every function with nonsingular coefficients is completely determined, up to a group translation, by its bispectrum.  A reconstruction algorithm for functions defined on the groups $SU(2)$ and $SO(3)$ is described.  The main theoretical result shows that the bispectrum is a complete source of invariants for homogeneous spaces of compact groups.

Results similar to those in this paper may be established for non-compact, non-commutative groups.  In the author's Ph.d. thesis \cite{kakaralathesis},
the completeness of the bispectrum for locally compact groups is established using the duality theorem of Tatsuuma. Those results will be reported
in a subsequent paper.  The Tannaka-Krein duality theorem, which is
central to this paper, has recently been extended to compact groupoids \cite{amini}. It would be interesting to see if a 
corresponding bispectral theory may be constructed there.

\begin{acknowledgements}
 I thank the numerous people who wrote for a copy of my Ph.D. dissertation \cite{kakaralathesis}, in which
this work was first presented.  I also thank the anonymous reviewers for their comments.  On writing this, I realize how much my late PhD supervisor, Professor Bruce M. Bennett, contributed to my studies and research.  His passion for mathematics, deep insights into abstract concepts, and guidance in life decisions continues to benefit me today.  %If you'd like to thank anyone, place your comments here
%and remove the percent signs.
\end{acknowledgements}

\section*{Appendix}
\appendix

We prove that $\hat{F}(1) = F(1)D_{1}(g)$.  The representation $D_1$ of $SO(3)$ is such that, for some fixed unitary $U$, we have $D_{1}(g) = UgU^{\dagger}$ for all $g$  in $SO(3)$.  Thus
\begin{eqnarray*}
F(1) &=& \int_{G} f(g) D_{1} (g)^{\dagger} dg,\\
&=& U \left[ \int_{G} f(g) g^{\dagger} dg \right] U^{\dagger}.
\end{eqnarray*}
Let $F_{s}(1)$ denote the matrix that results by evaluating the integral in brackets.  Since $f$ is real-valued,
and every matrix $g$ has real coefficients, the matrix $F_{s}(1)$ has only real coefficients.  Thus the determinant
of $F(1) = U F_{s}(1) U^{\dagger}$ is a real number.  Assume for the moment that 
${\rm det}\left[ F(1) \right] = {\rm det}\left[F_{s}(1)\right] > 0$. Let $\hat{F}(1)$ and $\hat{F}_{s}(1)$ denote
respectively the (unique) positive square roots of $F(1)F(1)^{\dagger}$ and $F_{s}(1)F_{s}(1)^{\dagger}$.  Since 
$F(1)F(1)^{\dagger} = U F_{s}(1)F_{s}(1)^{\dagger} U^{\dagger}$, it is easily seen that $\hat{F}(1) = U \hat{F}_s(1) U^{\dagger}$.
Now consider the polar decomposition $F_{s}(1) = HV$, where $H$ is positive definite and $V$ is unitary.  Note that 
$H = \left(F_{s}(1) F_{s}(1)^{\dagger} \right)_{+}^{\frac{1}{2}}$, and thus
$H = \hat{F}_{s}(1)$.  Since $F_{s}(1)$ is real-valued, $V$ must be real-valued orthogonal matrix.  Matching 
determinants on both sides of the equation $F_{s}(1) = \hat{F}_{s}(1) V$ reveals that ${\rm det}[V] = +1$, and
thus $V = g$, for some $g \in SO(3)$. Substitution reveals that 
\begin{eqnarray*}
\hat{F}(1) &=& U\hat{F}_{s}(1) U^{\dagger} = U F_{s}(1) g U^{\dagger} \\
&=& U F_{s}(1) U^{\dagger} U g U ^{\dagger} = F(1) D_{1}(g).
\end{eqnarray*}

The assumption that ${\rm det}[F(1)] > 0$ is not critical.  We use it only to obtain that ${\rm det}[V] = +1$,
where $V = \hat{F}_{s}(1)^{-1} F_{s}(1)$.  Instead of selecting $\hat{F}(1)$ to be the positive definite square
root of $F(1)F(1)^{\dagger}$, we may choose $\hat{F}(1)$ to be any square root such that 
${\rm det}[\hat{F}(1)] = {\rm det}[F(1)]$, e.g., by multiplying the top row of the positive definite 
square root matrix by $-1$ if necessary.  We do not know ${\rm det}[F(1)]$ {\em a priori}, but if we
store it as ``side information'' along with the bispectrum, then we obtain a complete rotation-invariant
description for any real-valued bandlimited function on $SO(3)$.  Note that ${\rm det}[F(1)]$ remains invariant
under translation on $SO(3)$, i.e., if $f(g) = s(hg)$, then $F(1) = S(1)D_{1}(h)$, but since ${\rm det}[D_{1}(h)] = +1$,
we obtain that ${\rm det}[F(1)] = {\rm det}[S(1)]$.

% BibTeX users please use one of
%\bibliographystyle{spbasic}      % basic style, author-year citations
%\bibliographystyle{spmpsci}      % mathematics and physical sciences
%\bibliographystyle{spphys}       % APS-like style for physics
%\bibliography{}   % name your BibTeX data base

% Non-BibTeX users please use

\end{document}